\newtheorem{theorem}{Theorem}[section]
\newtheorem{lemma}[theorem]{Lemma}
\newtheorem{proposition}[theorem]{Proposition}
\newtheorem{corollary}[theorem]{Corollary}
\newtheorem{definition}[theorem]{Definition}
  \newtheorem{remark}[theorem]{Remark}
\newenvironment{proof}{    % De proof o de prueba seg\'{u}n convenga
  \noindent
  \textbf{Proof.}}{
  \hfill $\Box$
  \vspace{3mm}
}
\numberwithin{equation}{section}
\newcommand{\N}{\mathbb{N}} %% Conjunto naturales:     \N
\newcommand{\Z}{\mathbb{Z}} %% Conjunto enteros:       \Z
\newcommand{\R}{\mathbb{R}} %% Conjunto reales:        \R
\newcommand{\C}{\mathbb{C}} %% Conjunto complejos:     \C
\newcommand{\K}{\mathbb{K}} %% Conjunto cuerpo :       \K
\newcommand{\D}{\mathbb{D}} %% Disco unidad:           \D
\DeclareMathOperator{\IL}{ind}
\begin{document}

%\title{\textbf{Shrinking and boundedly complete atomic decompositions in Fr\'echet spaces and their duals}}
\title{\textbf{Shrinking and boundedly complete atomic decompositions in Fr\'echet spaces}}
\author{\textbf{Jos\'e Bonet, Carmen Fern\'andez, Antonio Galbis, Juan M. Ribera}}
\date{}

\maketitle

\begin{abstract}
We study atomic decompositions in Fr\'echet spaces and their duals, as well as perturbation results. We define shrinking and boundedly complete atomic decompositions on a locally convex space, study the duality of these two concepts and their relation with the reflexivity of the space. We characterize when an
unconditional atomic decomposition is shrinking or boundedly complete in terms of properties of the space. Several examples of concrete atomic decompositions in function spaces  are also presented.
\end{abstract}

\renewcommand{\thefootnote}{}
\footnotetext{\emph{Key words and phrases.} Atomic decomposition, Schauder basis, Fr\'echet spaces, (LB)-spaces, reflexivity, locally convex spaces, shrinking, boundedly complete.

\emph{MSC 2010:} primary: 46A04, secondary: 42C15, 46A08, 46E10, 46A13, 46A25, 46A35, 46E10. }%
%\footnotetext{\emph{Acknowledgement.} This research was partially supported by MEC and FEDER %Project MTM2010-15200.}%

%\tableofcontents

\section*{Introduction}
Atomic decompositions are used to represent an arbitrary element $x$ of a function space $E$ as a series expansion involving a fixed countable set $(x_j)_j$ of elements in that space such that the coefficients of the expansion of $x$ depend in a linear and continuous way on $x$. Unlike Schauder bases, the expression of an element $x$ in terms of the sequence $(x_j)_j$, i.e.\ the reproduction formula for $x$, is not necessarily unique. Atomic decompositions appeared in applications to signal processing and sampling theory among other areas. Feichtinger characterized Gabor atomic decomposition for modulation spaces \cite{016} and the general theory was developed in his joint work with Gr\"{o}chenig \cite{013} and \cite{014}. In these papers, the authors show that reconstruction through atomic decompositions is not limited to Hilbert spaces. Indeed, they obtain atomic decompositions for a large class of Banach spaces, namely the coorbit spaces. Atomic decompositions are a less restrictive structure than bases,
because a complemented subspace of a Banach space with basis has always a natural atomic decomposition, that is  obtained from the
basis of the superspace. Even when the complemented subspace has a basis, there is not a systematic way to find it. There is a vast literature  dedicated to the subject. The related topic of frame expansions in Banach spaces was considered for example in  \cite{003} and \cite{017}.

Carando, Lasalle and Schmidberg \cite{005} and \cite{004} studied atomic decompositions and their relationship with duality and reflexivity of Banach spaces. They extended the concepts of shrinking and boundedly complete Schauder basis to the atomic decomposition framework. They considered when an atomic decomposition for a Banach space generates, by duality, an atomic decomposition for its dual space and characterized the reflexivity of a Banach space in terms of properties of its atomic decompositions. Unconditional atomic decompositions allowed them to prove James-type results characterizing shrinking and boundedly complete unconditional atomic decompositions in terms of the containment in the Banach space of copies of $\ell_1$ and $c_0$ respectively.

Very recently, Pilipovic and Stoeva \cite{028} (see also \cite{034}) studied series expansions in (countable) projective or inductive limits of Banach spaces. In this article we begin a systematic study of atomic decompositions in locally convex spaces, but our main interest lies in Fr\'echet spaces and their duals. The main difference with respect to the concept considered in \cite{028} is that our approach does not depend on a fixed representation of the Fr\'echet space as a projective limit of Banach spaces. We mention the following preliminary example as a motivation for our work: Leontiev proved that for each bounded convex domain $G$ of the complex plane ${\mathbb C}$ there is a sequence of complex numbers $(\lambda_j)_j$ such that every holomorphic function $f \in H(G)$ can be expanded as a series of the form $f(z)=\sum_{j=1}^\infty a_j e^{\lambda_j z}$, converging absolutely and uniformly on the compact subsets of $G$. It is well-known that this expansion is not unique. We refer the reader e.g. to
Korobeinik's survey \cite{043}. A priori it is not clear whether the coefficients $a_j$ in the expansion can be selected depending continuously on the function $f$. However,
Korobeinik and Melikhov \cite[Th. 4.3 and remark 4.4(b)]{032} showed that this is the case when the boundary of the open set $G$ is of class $C^2$; thus obtaining what we call below an unconditional atomic decomposition for the Fr\'echet space $H(G)$. These are the type of phenomena and reproducing formulas that we try to understand in our paper.

Our main purpose is to investigate the relation between the properties of an existing atomic decomposition in a Fr\'echet space $E$ and the structure of the space, for example if $E$  is reflexive or if it contains copies of $c_0$ or $\ell_1$. For complete barrelled spaces, we show in \ref{thrm01} that having an atomic decomposition is equivalent to being complemented in a complete locally convex space with a Schauder basis. Perturbation results for atomic decompositions are given in Theorem \ref{perturb}. We introduce shrinking and boundedly complete atomic decompositions on a locally convex space, study the duality of these two concepts and their relation with the reflexivity of the space; see Theorem \ref{prop07}. Unconditional atomic decompositions are studied in Section \ref{uncond}. We completely characterize, for a given unconditional atomic decomposition, when it is shrinking or boundedly complete in terms of properties of the space in Theorems \ref{thrm21} and \ref{boundedlyLF}. As a tool, that
could be of independent interest, we show Rosenthal $\ell_1$ Theorem for boundedly retractive inductive limits of Fr\'echet spaces; see Proposition \ref{prop30}. Some examples of concrete atomic decompositions in function spaces  are also included in Section \ref{Examples}. Our Theorem \ref{th01} shows a remarkable  relation between the existence of a continuous linear extension operator for $C^\infty$ functions defined on a compact subset $K$ of $\R^n$ and the existence of an unconditional atomic decomposition in $C^\infty(K)$ using exponentials.

\section{Atomic decompositions in locally convex spaces}
Throughout this work, $E$  denotes a locally convex Hausdorff linear topological space (briefly, a lcs) with additional hypotheses added as needed and $cs(E)$ is the system of continuous seminorms describing the topology of $E.$ The symbol $E'$ stands for the topological dual of $E$ and $\sigma(E',E)$ for the weak* topology on $E'$. We set  $E'_\beta$ for the dual $E'$ endowed with the topology $\beta(E',E)$ of uniform convergence on the bounded sets of $E.$ We will refer to $E'_\beta$ as the strong dual of $E.$ The bidual $E''$ of $E$ is the dual of $E'_\beta$. Basic references for lcs are \cite{006} and \cite{044}. If $T:E \rightarrow F$ is a continuous linear operator, its transpose is denoted by $T':F' \rightarrow E'$, and it is defined by $T'(v)(x):=v(T(x)), x \in E, v \in F'$.  We recall that a Fr\'echet space is a complete metrizable lcs. An $\left(LF\right)$-space is a lcs that can be represented as an  inductive limit of a sequence $\left(E_n\right)_n $ of Fr\'echet spaces, and in case all the
spaces $E_n$ are Banach spaces, we call it an $\left(LB\right)$-space. In most of the results we need the assumption that the lcs is  barrelled. The reason is that Banach-Steinhaus theorem holds for barrelled lcs. Every Fr\'echet space and every $\left(LB\right)$-space is barrelled.   We refer the reader to  \cite{006} and \cite{010} for more information about barrelled spaces.

\begin{definition} Let $E$ be a lcs, $\{x_j\}_{j = 1}^{\infty}\subset E$ and $\{x_j'\}_{j = 1}^{\infty}\subset E' $. We say that $\left( \{ x_j' \} , \{ x_j \} \right)$ is an \textit{atomic decomposition of $E$} if
\begin{equation*}
 x = \sum_{j = 1}^{\infty} x_j' \left( x \right) x_j , \quad \mbox{ for all } x \in E,
\end{equation*}
the series converging in $E$.
\end{definition}

\par\medskip\noindent
A lcs $E$ which admits an atomic decomposition is separable. Let $E$ be a lcs with a Schauder basis $\{e_j\}_{j = 1}^{\infty} \subset E$ and let  $\{e_j'\}_{j = 1}^{\infty} \subset E'$ denote the coefficient functionals. Clearly,  $ \left( \{ e_j' \} , \{ e_j \} \right) $ is an atomic decomposition for $E$. The main difference with Schauder basis is that, in general, one may have a sequence  $\{x_j\}_{j = 1}^{\infty} \subset E$ and two different sequences $\{x_j'\}_{j = 1}^{\infty}\subset E'$ and $\{y_j'\}_{j = 1}^{\infty}\subset E'$ so that both $\left( \{ x_j' \} , \{ x_j \} \right)$ and $\left( \{ y_j' \} , \{ x_j \} \right)$ are atomic decompositions. See the comments after Theorem \ref{th01}.

\begin{proposition} \label{prop00}
Let $E$ be a lcs and let $P: E \rightarrow E$ be a continuous linear projection. If $ \left( \{ x_j' \} , \{ x_j \} \right) $ is an atomic decomposition for $E$, then $ \left( \{ P'(x_j') \} , \{ P\left(x_j\right) \} \right) $ is an atomic decomposition for $P\left(E\right)$.

In particular, if $E$ is isomorphic to a complemented subspace of a lcs with a Schauder basis, then $E$ admits an atomic decomposition.
\end{proposition}
\begin{proof}
    Since $\langle P'(x_j'), y \rangle = \langle x_j' , P\left( y \right) \rangle = \langle x_j' , y \rangle $ for all $y \in P(E)$ and $ j \in \N $, we obtain an atomic decomposition:
      \begin{equation*}
      y = P\left(y\right) = P\left( \sum_{ j = 1}^{\infty} x_j' \left(y\right) x_j\right) = \sum_{ j = 1}^{\infty} \langle P'\left(x_j' \right), y \rangle P \left( x_j \right).
      \end{equation*}
\end{proof}

\par\medskip\noindent
As usual $\omega$ denotes the countable product $\K^{\N}$ of copies of the scalar field, endowed by the product topology, and  $\varphi$ stands for the space of sequences with finitely many non-zero coordenates. A sequence space $\bigwedge$ is a lcs which contains $\varphi$ and is continuously included in $\omega.$

\begin{lemma}\label{lem02}
    Let $\{ x_j \}$ be a fixed sequence of non-zero elements in a lcs $E$ and let us denote by $\bigwedge$ the vector space
    \begin{equation}\label{eq01}
            \bigwedge := \{ \alpha = \left(\alpha_j\right)_j \in \omega : \sum_{j = 1}^{\infty} \alpha_j x_j \mbox{ is convergent in } E \}.
        \end{equation}
    Endowed with the  system of seminorms
        \begin{equation}\label{eq01}
            {\mathcal Q} := \left\{ q_p\left(\left(\alpha_j\right)_j\right) := \sup_n p\left(\sum_{j = 1}^n \alpha_j x_j \right) , \mbox{ for all } p \in cs(E) \right\}
        \end{equation} $\bigwedge$ is a sequence space and the canonic unit vectors form a Schauder basis. If $E$ is complete, then $\bigwedge$ is complete.
    In particular, if $E$ is a Fr\'echet (resp. Banach) space, so is $\bigwedge$.
\end{lemma}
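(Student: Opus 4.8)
The plan is to check the four assertions in turn, with completeness as the substantial point. First I would note that each $q_p$ is finite on $\bigwedge$: if $\alpha \in \bigwedge$ the partial sums $s_n := \sum_{j=1}^n \alpha_j x_j$ converge in $E$, hence form a bounded set, so $\sup_n p(s_n) < \infty$; subadditivity and positive homogeneity of $q_p$ are inherited from $p$ via $\sup_n(a_n+b_n) \le \sup_n a_n + \sup_n b_n$. That $\bigwedge$ is a sequence space then splits into two easy checks. The inclusion $\varphi \subset \bigwedge$ is immediate, a finitely supported sequence giving a finite sum. For the continuous inclusion $\bigwedge \hookrightarrow \omega$ I would estimate one coordinate at a time: writing $\alpha_k x_k = s_k - s_{k-1}$ and choosing, by Hausdorffness of $E$ and $x_k \neq 0$, a seminorm $p \in cs(E)$ with $p(x_k) > 0$, one gets $|\alpha_k|\,p(x_k) = p(s_k - s_{k-1}) \le 2\,q_p(\alpha)$. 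This makes every coordinate projection continuous and, read the other way, shows $\mathcal{Q}$ separates points, so $\bigwedge$ is Hausdorff.

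For the Schauder basis property I would compute the tail seminorm directly. The vector $\alpha - \sum_{k=1}^N \alpha_k e_k$ has vanishing first $N$ coordinates, so $q_p(\alpha - \sum_{k=1}^N \alpha_k e_k) = \sup_{n>N} p(s_n - s_N)$. Since $(s_n)_n$ converges in $E$ it is Cauchy, whence this supremum tends to $0$ as $N \to \infty$ for every $p$; thus $\alpha = \sum_k \alpha_k e_k$ in $\bigwedge$. Uniqueness of the expansion and continuity of the coefficient functionals are precisely the continuity of the coordinate projections obtained above, so $\{e_k\}$ is a Schauder basis.

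The main work, and the step I expect to be the obstacle, is completeness. I would take a Cauchy net $(\alpha^i)_{i \in I}$ in $\bigwedge$ (Cauchy filters work verbatim), since metrizability is not yet available. The coordinate estimate forces each $(\alpha^i_k)_i$ to be Cauchy in $\K$, producing a coordinatewise limit $\alpha = (\alpha_k)_k \in \omega$; the task is to show $\alpha \in \bigwedge$ and $\alpha^i \to \alpha$, and this is exactly where completeness of $E$ is used. Fixing $p$ and $\eps > 0$, choose $i_0$ with $q_p(\alpha^i - \alpha^{i_0}) < \eps$ for $i \ge i_0$, and then, using convergence of the series attached to the single element $\alpha^{i_0}$, an index $N_0$ with $p\bigl(\sum_{j=m+1}^n \alpha^{i_0}_j x_j\bigr) < \eps$ for $n > m \ge N_0$. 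The key point is that differences of tails are controlled by $q_p$: writing $S^l_k := \sum_{j=1}^k \alpha^l_j x_j$,
\[
p\Bigl(\sum_{j=m+1}^n (\alpha^i_j - \alpha^{i_0}_j)x_j\Bigr) \le p(S^i_n - S^{i_0}_n) + p(S^i_m - S^{i_0}_m) \le 2\,q_p(\alpha^i - \alpha^{i_0}).
\]
Combining gives $p\bigl(\sum_{j=m+1}^n \alpha^i_j x_j\bigr) < 3\eps$ for all $i \ge i_0$ and $n > m \ge N_0$; freezing $n,m$, letting $i$ run along the net, and using that the finite sum converges coordinatewise while $p$ is continuous, the bound $\le 3\eps$ passes to $\alpha$. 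Hence $(s_n)_n$ is Cauchy in $E$ and converges by completeness, so $\alpha \in \bigwedge$. An entirely analogous passage to the limit inside $q_p(\alpha^i - \alpha) = \sup_n p(S^i_n - s_n)$ gives $q_p(\alpha^i - \alpha) \le \eps$ for $i \ge i_0$, i.e.\ $\alpha^i \to \alpha$ in $\bigwedge$. The delicate part is precisely this interchange of the supremum over $n$ with the net limit: one must freeze $n$ first, apply continuity of $p$ to the finite sums, and only then take $\sup_n$.

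The last assertion is then formal. If $E$ is Fr\'echet, a countable increasing fundamental system $(p_k)$ of seminorms yields the cofinal family $(q_{p_k})$, because $p \le C p'$ implies $q_p \le C q_{p'}$; thus $\bigwedge$ is metrizable and, being complete, Fr\'echet. If $E$ is Banach, the single seminorm $q_{\|\cdot\|}$ is in fact a norm, since $q_{\|\cdot\|}(\alpha) = 0$ forces each $\alpha_j x_j = 0$ and hence $\alpha = 0$ as the $x_j$ are non-zero, so $\bigwedge$ is Banach.
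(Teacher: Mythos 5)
Your proof is correct, and it is more self-contained than the paper's, which disposes of the lemma in three lines: the authors declare it ``routine'' that the unit vectors form a topological basis, observe the monotonicity $q\left(\sum_{i=1}^{n}\alpha_i e_i\right) \leq q\left(\sum_{i=1}^{n+m}\alpha_i e_i\right)$ of the seminorms in $\mathcal{Q}$, and then invoke the criterion \cite[14.3.6]{006} (which upgrades a topological basis with this monotonicity property to a Schauder basis via equicontinuity of the partial-sum projections); completeness is asserted without proof. You replace the citation by a direct argument: the telescoping estimate $|\alpha_k|\,p(x_k) \leq 2\,q_p(\alpha)$ gives continuity of the coordinate functionals (hence uniqueness of expansions and Hausdorffness of $\bigwedge$ in one stroke), and the identity $q_p\left(\alpha - \sum_{k=1}^N \alpha_k e_k\right) = \sup_{n>N} p(s_n - s_N)$ reduces basis convergence to the Cauchy property of the partial sums in $E$. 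Your completeness argument --- Cauchy nets, the $3\eps$ control of difference tails by $q_p$, and the careful order of operations (freeze $n$, pass to the net limit through continuity of $p$ on finite sums, only then take $\sup_n$) --- is exactly the standard proof the paper leaves implicit, and you correctly identify the sup/limit interchange as the one delicate point. What the paper's route buys is brevity and an appeal to a known structural result; what yours buys is explicit quantitative bounds and a verification of the completeness claim that the published proof does not actually contain.
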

\begin{proof}
  It is routine to check that the unit vectors are a topological basis of $\bigwedge.$ Since
 $$
q \left(\sum_{i=1}^{n}\alpha_i e_i \right )\leq q \left(\sum_{i=1}^{n+m}\alpha_i e_i \right )$$ for every $q\in {\mathcal Q}$ and for all $m,n \in \N$ and $\alpha_1, \dots, \alpha_{n+m} \in \K$ we can apply \cite[14.3.6]{006} to conclude that the unit vectors are also a Schauder basis.
\end{proof}

\begin{theorem}\label{thrm01}
Let $E$ be a complete barrelled locally convex space. The following conditions are equivalent:
\begin{itemize}
\item[\rm (1)] $E$ admits an atomic decomposition.
\item[\rm (2)] $E$ is isomorphic to a complemented subspace of a complete sequence space with the canonical unit vectors as Schauder basis.
\item[\rm (3)] $E$ is isomorphic to a complemented subspace of a complete locally convex space with a Schauder basis.
\end{itemize}
In particular, a Fr\'echet space $E$ admits an atomic decomposition if and only if it is
isomorphic to a complemented subspace of a Fr\'echet space with a Schauder basis.
\end{theorem}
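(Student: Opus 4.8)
The plan is to prove the cycle $(1)\Rightarrow(2)\Rightarrow(3)\Rightarrow(1)$, of which only the first implication requires real work. The implication $(2)\Rightarrow(3)$ is immediate, since a sequence space carrying the canonical unit vectors as a Schauder basis is in particular a complete locally convex space with a Schauder basis. The implication $(3)\Rightarrow(1)$ is exactly the content of the second assertion of Proposition \ref{prop00}: a complemented subspace of a lcs with a Schauder basis admits an atomic decomposition. Thus everything reduces to manufacturing, out of a given atomic decomposition of $E$, a complete sequence space with the canonical unit vectors as Schauder basis in which $E$ sits as a complemented subspace.

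For $(1)\Rightarrow(2)$, let $(\{x_j'\},\{x_j\})$ be an atomic decomposition of $E$; discarding the indices with $x_j=0$ (which contribute nothing to any expansion) I may assume every $x_j\neq 0$. I would then form the sequence space $\Lambda$ attached to $\{x_j\}$ exactly as in Lemma \ref{lem02}, equipped with the seminorms $q_p(\alpha)=\sup_n p(\sum_{j=1}^n \alpha_j x_j)$ for $p\in cs(E)$. Since $E$ is complete, Lemma \ref{lem02} guarantees that $\Lambda$ is a complete sequence space for which the canonical unit vectors form a Schauder basis, which is precisely the ambient space demanded by (2). Now introduce the synthesis map $S:\Lambda\to E$, $S(\alpha)=\sum_j \alpha_j x_j$, well defined by the definition of $\Lambda$, and the analysis map $A:E\to\Lambda$, $A(x)=(x_j'(x))_j$. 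The defining identity $x=\sum_j x_j'(x)x_j$ shows at once both that $A(x)$ really lands in $\Lambda$ and that $S\circ A=\mathrm{id}_E$. Consequently $P:=A\circ S$ satisfies $P^2=A(SA)S=AS=P$, so $P$ is a projection of $\Lambda$ onto $A(E)=P(\Lambda)$, and $A$ is an algebraic isomorphism of $E$ onto this complemented subspace whose inverse is the restriction of $S$.

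It then remains only to verify that $A$ and $S$ are continuous, for in that case $A$ becomes the desired topological isomorphism onto a complemented subspace. Continuity of $S$ is a direct seminorm estimate: for $p\in cs(E)$ one has $p(S(\alpha))=\lim_n p(\sum_{j=1}^n \alpha_j x_j)\le \sup_n p(\sum_{j=1}^n \alpha_j x_j)=q_p(\alpha)$. The genuine difficulty, and the only place where barrelledness is used, is the continuity of the analysis operator $A$. I would handle it through the partial-sum operators $S_n:E\to E$, $S_n(x)=\sum_{j=1}^n x_j'(x)x_j$, each of which is linear and continuous. Because the decomposition converges, $S_n(x)\to x$ for every $x$, so each orbit $\{S_n(x):n\in\N\}$ is bounded in $E$; as $E$ is barrelled, the Banach--Steinhaus theorem makes the family $\{S_n\}$ equicontinuous. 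Equicontinuity furnishes, for each $p\in cs(E)$, a seminorm $r\in cs(E)$ with $p(S_n(x))\le r(x)$ for all $n$ and all $x$, and taking the supremum over $n$ yields $q_p(A(x))=\sup_n p(S_n(x))\le r(x)$, which is exactly the continuity of $A$. This completes $(1)\Rightarrow(2)$.

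Finally, the Fr\'echet case follows by specialization: a Fr\'echet space is complete and barrelled, and by the last assertion of Lemma \ref{lem02} the space $\Lambda$ constructed above is again Fr\'echet, so (1) exhibits $E$ as a complemented subspace of a Fr\'echet space with a Schauder basis; the converse direction is once more Proposition \ref{prop00}, together with the fact that a complemented subspace of a Fr\'echet space is itself Fr\'echet. The step I expect to be the crux is the equicontinuity of the partial-sum operators via Banach--Steinhaus, since this is where the barrelledness hypothesis is indispensable and where the otherwise merely pointwise convergence of the decomposition is upgraded to the uniform seminorm control needed for continuity of the coefficient map.
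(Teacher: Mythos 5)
Your proposal is correct and follows essentially the same route as the paper: the sequence space of Lemma \ref{lem02}, the analysis map $U(x)=(x_j'(x))_j$ made continuous via equicontinuity of the partial-sum operators (Banach--Steinhaus on the barrelled space $E$), the seminorm estimate $p(S(\alpha))\leq q_p(\alpha)$ for the synthesis map, and the identity $S\circ U = I_E$ producing the projection $U\circ S$. Your added details, spelling out why the $S_n$ are pointwise bounded and why $A$ maps onto $P(\Lambda)$, are accurate elaborations of the same argument.
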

\begin{proof}
  $(1) \Rightarrow (2)$ Let $ \left( \{ x_j' \} , \{ x_j \} \right) $ be an atomic decomposition of $E$. We may assume that $x_j \neq 0$ for all $j \in \N$. Let $\bigwedge$ be the complete lcs of sequences defined as in Lemma \ref{lem02}. We define $F_n:E\longrightarrow E$ as $F_n\left(x\right):= \sum_{ j = 1 }^n x_j'\left(x\right) x_j$. Since $E$ is barrelled the sequence $\left(F_n \right)_n$ is equicontinuous, that is, for every $p \in cs(E)$ there exists $p' \in cs(E)$ such that $p\left(F_n \left( x \right) \right) \leq p'\left(x\right)$ for every $ x \in E$ and for every $n \in \N.$ Consequently the map $U:E \longrightarrow \bigwedge,\ U\left(x\right) := \left( x_j'\left(x\right) \right)_j,$ is injective and continuous. Moreover, the map $S:\bigwedge \longrightarrow E,\ S\left(\left(\alpha_j\right)_j \right) := \sum_{ j = 1 }^{\infty} \alpha_j x_j,$ is linear and  continuous, since
\begin{equation*}
 p \left(S\left(\left(\alpha_j\right)_j \right)\right) = p \left( \sum_{ j = 1 }^{\infty} \alpha_j x_j \right) \leq \sup_n p \left( \sum_{ j = 1}^n \alpha_j x_j \right) = q_p \left(\left(\alpha_j\right)_j\right).
\end{equation*}
As $S \circ U = I_E$ we conclude that $U$ is an isomorphism into its range $U\left (E \right )$ and $U \circ S$ is a projection of $\bigwedge$ onto $U\left (E \right ).$

$(2) \Rightarrow (3)$ is trivial, while $(3) \Rightarrow (1)$ is consequence of Proposition \ref{prop00}.
\end{proof}

The following Corollary is a consequence of an important result of Pe{\l}czy{\'n}ski. A locally convex space is said to satisfy the bounded approximation
property if  the identity of E is the pointwise limit of an equicontinuous  net of
finite rank operators. If the locally convex space is separable, then the net can be replaced by a sequence. Pelczynski \cite{041}  (see also \cite[Theorem 2.11]{033} ) proved
that a separable Fr\'echet space has
the bounded approximation property if and only if it is isomorphic to a complemented Fr\'echet  space with a Schauder basis.

\begin{corollary}
 A Fr\'echet space $E$ admits an atomic decomposition if and only if $E$ has the bounded approximation property.
\end{corollary}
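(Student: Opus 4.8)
The plan is to read the corollary as the formal composition of two equivalences already available: the \lqq in particular\rqq\ clause of Theorem \ref{thrm01}, which identifies the existence of an atomic decomposition with being isomorphic to a complemented subspace of a Fr\'echet space with a Schauder basis, and the quoted theorem of Pe\l czy\'nski, which identifies the bounded approximation property (for separable Fr\'echet spaces) with the very same structural condition. The bridge between the two notions is therefore the property \lqq isomorphic to a complemented subspace of a Fr\'echet space with a Schauder basis,\rqq\ and the whole argument consists in passing through it.

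For the direction from atomic decomposition to bounded approximation property I would first invoke the remark recorded after the Definition, namely that a lcs admitting an atomic decomposition is separable; this makes $E$ a \emph{separable} Fr\'echet space and is precisely what is needed in order to apply Pe\l czy\'nski's criterion. Theorem \ref{thrm01} then supplies the structural condition, and Pe\l czy\'nski's theorem converts it into the bounded approximation property. The converse runs the same chain backwards: from the bounded approximation property Pe\l czy\'nski's theorem returns the structural condition, and Theorem \ref{thrm01} turns it into an atomic decomposition.

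The single delicate point --- and the step I expect to be the only genuine obstacle --- is the separability hypothesis built into Pe\l czy\'nski's theorem. In the forward implication separability is granted for free by the atomic decomposition, but in the converse one must make sure the space to which the theorem is applied is separable. I would resolve this by using the sequential form of the bounded approximation property described in the paragraph preceding the statement, which already presupposes separability; equivalently, one simply restricts to separable Fr\'echet spaces, the only setting in which either side of the equivalence can occur. With that settled the corollary requires no further computation.
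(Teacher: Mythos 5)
Your proposal is exactly the paper's proof: the corollary is established there in one line by combining Theorem \ref{thrm01} with Pe{\l}czy{\'n}ski's theorem, which is precisely the composition through the bridge property \lqq complemented subspace of a Fr\'echet space with a Schauder basis\rqq\ that you describe, and your use of the remark that an atomic decomposition forces separability (needed to invoke Pe{\l}czy{\'n}ski in the forward direction) correctly fills in the one detail the paper leaves implicit. A small caveat on your final parenthetical: it is not true that neither side of the equivalence can occur for a non-separable Fr\'echet space, since a non-separable space such as $\ell_\infty$ does have the bounded approximation property; the correct resolution is the other one you offer, namely that separability is tacitly assumed in the statement, exactly as in Pe{\l}czy{\'n}ski's criterion itself.
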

\begin{proof}
 It follows from  Theorem \ref{thrm01}
and the aforementioned result of Pelczynski \cite{041}.
\end{proof}

\par\medskip\noindent

Taskinen \cite{040} gave examples of a complemented subspace $F$ of a Fr\'echet Schwartz space $E$ with a Schauder basis, such that $F$ is nuclear and does not have a basis. By Theorem \ref{thrm01}, $F$ has an atomic decomposition. Vogt \cite{045} gave examples of nuclear (hence separable) Fr\'echet spaces $E$ which do not have the bounded approximation property. These separable Fr\'echet spaces $E$ do not admit an atomic decomposition, although by Komura-Komura's Theorem \cite[Theorem 29.8]{044} they are isomorphic to a subspace of the countable product $s^{\N}$ of copies of the space of rapidly decreasing sequence, that has a Schauder basis.

\par\medskip\noindent
To end this section we discuss perturbation results. The following result, that is needed below, can be found in \cite[page 436]{031}: \textit{Let $E$ be a complete lcs and let $T:E \to E$ be an operator with the property that there exists $p_0 \in cs(E)$ such that for all $p \in cs(E)$ there is $C_p>0$ such that $p(Tx)\leq C_p p_0(x)$  for all $x \in E$ (that is, $T$ maps a neighborhood into a bounded set) and moreover $C_{p_0}$ can be chosen strictly smaller than $1.$ Then $I-T$ is invertible (with continuous inverse on $E$).}

\begin{theorem}\label{perturb}
 Let $(\{x_j'\}, \{x_j\} )$ be an atomic decomposition of a complete lcs $E.$
\begin{description}
 \item [{\rm (1)}] If $(y_j)_j$ is a sequence in $E$  satisfying that there is $p_0 \in cs(E)$ such that for all $p \in cs(E)$ there is $C_p>0$ with:
\par\indent
(i) $\sum_{j=1}^{\infty}|x_j'(x)| p(x_j -y_j) \leq p_0(x)C_p$ for each $x\in E$ and
\par\indent
(ii) $C_{p_0}$ can be chosen strictly smaller than $1,$
\par\noindent
then, there exists $(y_j')_j$ a sequence in $E'$ such that $(\{y_j'\}, \{y_j\} )$ is an atomic decomposition for $E.$
\item [{\rm (2)}] If $(y_j')_j$ is a sequence in $E'$ satisfying that there is $p_0 \in cs(E)$ such that for all $p \in cs(E)$ there is $C_p>0$ with:
\par\indent
(i) $\sum_{j=1}^{\infty} |(x_j'-y_j')(x)| p(x_j ) \leq p_0(x)C_p$ for each $x\in E$ and
\par\indent
(ii) $C_{p_0}$ can be chosen strictly smaller than $1,$
\par\noindent
then, there exists $(y_j)_j$ a sequence in $E$ such that $(\{y_j'\}, \{y_j\})$ is an atomic decomposition for $E.$
\end{description}
\end{theorem}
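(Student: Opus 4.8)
The plan is to reduce both statements to the invertibility criterion quoted above from \cite[page 436]{031}, applied to a suitable \emph{difference operator} built from the two decompositions. For part (1), I would first define
$$T x := \sum_{j=1}^{\infty} x_j'(x)\,(x_j - y_j), \qquad x \in E.$$
Hypothesis (i) guarantees that for each $p \in cs(E)$ one has $\sum_{j} |x_j'(x)|\,p(x_j - y_j) \leq C_p\, p_0(x)$; in particular the series defining $Tx$ is absolutely convergent in every seminorm and hence, by the (sequential) completeness of $E$, convergent. The same estimate $p(Tx) \leq \sum_j |x_j'(x)|\,p(x_j - y_j) \leq C_p\, p_0(x)$ shows that $T$ is linear, continuous, and satisfies exactly the growth condition required by the invertibility result, while (ii) provides $C_{p_0}<1$. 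Therefore $S := I - T$ is a topological isomorphism of $E$. The point is now to recognize $S$: using $x = \sum_j x_j'(x)\, x_j$ one computes $Sx = x - Tx = \sum_j x_j'(x)\, y_j$. Setting $y_j' := (S^{-1})'(x_j')$, that is $y_j'(z) = x_j'(S^{-1}z)$, and writing any $z \in E$ as $z = S(S^{-1}z)$ yields
$$z = \sum_{j=1}^{\infty} x_j'(S^{-1}z)\,y_j = \sum_{j=1}^{\infty} y_j'(z)\,y_j,$$
so $(\{y_j'\},\{y_j\})$ is the desired atomic decomposition.

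For part (2) I would proceed symmetrically, this time perturbing the functionals and putting
$$T x := \sum_{j=1}^{\infty} (x_j' - y_j')(x)\,x_j, \qquad x \in E.$$
Hypothesis (i) again ensures convergence of the series together with the bound $p(Tx) \leq C_p\, p_0(x)$, and (ii) gives $C_{p_0}<1$, so $S := I - T$ is an isomorphism, now with $Sx = x - Tx = \sum_j y_j'(x)\, x_j$. Here the natural choice is $y_j := S^{-1}(x_j)$; since $S^{-1}$ is continuous and linear it commutes with the convergent series, giving for every $x \in E$
$$x = S^{-1}(Sx) = S^{-1}\Big(\sum_{j=1}^{\infty} y_j'(x)\,x_j\Big) = \sum_{j=1}^{\infty} y_j'(x)\,S^{-1}(x_j) = \sum_{j=1}^{\infty} y_j'(x)\,y_j,$$
which is again an atomic decomposition for the prescribed sequence $(y_j')_j$.

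The only genuinely delicate points, which I would check with care, are the well-definedness and continuity of $T$ (both following from hypothesis (i) together with the completeness of $E$, which turns absolute convergence in each seminorm into convergence) and the interchange of $S^{-1}$ with the infinite sum in part (2), justified by the continuity of $S^{-1}$. Beyond these, the argument is largely bookkeeping: the real content is the correct identification of $S = I-T$ with the \lqq half-perturbed\rqq{} reconstruction map, so that composing with $S^{-1}$ on the appropriate side repairs the decomposition. I expect no deeper obstacle, since the quoted Neumann-type invertibility lemma does the heavy lifting once its hypotheses are matched to conditions (i) and (ii).
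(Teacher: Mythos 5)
Your proposal is correct and follows essentially the same route as the paper's own proof: the same operators $T(x)=\sum_j x_j'(x)(x_j-y_j)$ and $T(x)=\sum_j (x_j'-y_j')(x)x_j$, the same Neumann-type invertibility lemma from \cite[page 436]{031}, and the same choices $y_j'=(S^{-1})'(x_j')$ and $y_j=S^{-1}(x_j)$. You merely spell out the final identification of $S$ and the verification of the reproduction formula, which the paper leaves implicit in the phrase \lqq to conclude\rqq.
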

\begin{proof}
 In case (1) we consider the operator $T(x)=\sum_{j=1}^{\infty} x_j'(x) (x_j -y_j).$ It is well defined as the series in absolutely convergent in $E$, hence convergent, and  $T$ is continuous as $$p(Tx)\leq  \sum_{j=1}^{\infty}|x_j'(x)| p(x_j -y_j) \leq p_0(x)C_p.$$  Now, $S:=I-T$ is invertible, therefore one can take $y_j'=(S^{-1})'(x_j')$ to conclude.
\par\medskip\noindent
In case (2) we argue in the same way with the operator $T(x)=\sum_{j=1}^{\infty} (x_j'-y_j')(x) x_j,$ and the sequence $(y_j)_j$ is given by $S^{-1}(x_j), $ $j \in \N.$
\end{proof}

Our next result should be compared with \cite[Proposition 2]{030}.

\begin{corollary}
  Let $(\{x_j'\}, \{x_j\})$ be an atomic decomposition of a complete lcs $E.$ Suppose that there exists $p_0\in cs(E)$ such that $|x'_j(x)|\leq p_0(x)$ for every $x\in E, j\in \N.$ Let $\left(y_j\right)_j\subset E$ such that $\sum_{j=1}^\infty p(y_j - x_j) < \infty$ for every $p\in cs(E)$ and $\sum_{j=1}^\infty p_0(y_j - x_j) < 1.$ Then there exists $(y_j')_j\subset E'$ such that $(\{y_j'\}, \{y_j\} )$ is an atomic decomposition for $E.$
\end{corollary}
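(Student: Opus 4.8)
The plan is to derive this Corollary directly from part (1) of Theorem \ref{perturb}, since the hypotheses stated here form a convenient sufficient condition for the abstract conditions (i) and (ii) appearing there. The only real work is to check those two conditions using the \emph{same} seminorm $p_0$ that controls the coefficient functionals, so that $p_0$ can simultaneously play its bounding role and supply the contraction constant.

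First I would exploit the uniform bound on the coefficient functionals. For every $p \in cs(E)$ and every $x \in E$,
\begin{equation*}
\sum_{j=1}^{\infty} |x_j'(x)|\, p(x_j - y_j) \leq p_0(x) \sum_{j=1}^{\infty} p(x_j - y_j),
\end{equation*}
where $p_0(x)$ has been pulled out using $|x_j'(x)| \leq p_0(x)$. Setting $C_p := \sum_{j=1}^{\infty} p(x_j - y_j)$, which is finite by the assumption $\sum_j p(y_j - x_j) < \infty$, this is exactly condition (i) of Theorem \ref{perturb}(1). For condition (ii) I simply observe that $C_{p_0} = \sum_{j=1}^{\infty} p_0(x_j - y_j) < 1$ by hypothesis, so the constant attached to $p_0$ is strictly less than $1$, as required.

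A minor technical point is that Theorem \ref{perturb}(1) asks for strictly positive constants $C_p$; should some $C_p$ vanish, that is $p(x_j - y_j) = 0$ for all $j$, I would enlarge it to any positive value, which only weakens condition (i). For $p = p_0$ I would keep the enlarged constant below $1$, which is possible precisely because $\sum_j p_0(x_j - y_j) < 1$. With (i) and (ii) verified, Theorem \ref{perturb}(1) yields a sequence $(y_j')_j \subset E'$ making $(\{y_j'\}, \{y_j\})$ an atomic decomposition of $E$, which is the assertion. I do not expect a substantial obstacle here: the entire content is recognizing that summability of $p(x_j - y_j)$ together with the normalization $|x_j'(x)| \leq p_0(x)$ packages exactly into the perturbation hypothesis, with $p_0$ serving the double role of bounding the functionals and furnishing the contraction constant.
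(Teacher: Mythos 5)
Your proposal is correct and is exactly the argument the paper intends: the corollary is stated without proof precisely because, as you verify, taking $C_p := \sum_{j=1}^\infty p(x_j - y_j)$ (finite by hypothesis) gives condition (i) of Theorem \ref{perturb}(1) via $|x_j'(x)| \leq p_0(x)$, and $C_{p_0} < 1$ gives condition (ii). Your remark about enlarging a vanishing $C_p$ is a harmless technicality handled correctly.
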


\begin{corollary}\label{cor:perturbadualfrechet}
 Let $E$ be a Fr\'echet space with fundamental system of seminorms $(p_k)_k$ and let $(\{x_j'\}, \{x_j\} )$ be an atomic decomposition of $E.$ Suppose that  $(y_j')_j\subset E'$ satisfies
$$
p_1^\ast (x_j'-y_j')<\frac{1}{1+j^2 p_j(x_j)+3^j p_1(x_j)}\ \mbox{where}\ p_1^\ast(x')=\sup \{|x'(x)|: p_1(x)\leq 1 \}.
$$ Then there exists $(y_j)_j\subset E$ such that $(\{y_j'\}, \{y_j\} )$ is an atomic decomposition for $E.$
\end{corollary}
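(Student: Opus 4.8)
The plan is to deduce Corollary \ref{cor:perturbadualfrechet} directly from part (2) of Theorem \ref{perturb}. The hypothesis of that theorem requires finding a single seminorm $p_0 \in cs(E)$ and, for every $p \in cs(E)$, a constant $C_p > 0$ satisfying condition (i), namely $\sum_{j=1}^\infty |(x_j' - y_j')(x)|\, p(x_j) \leq p_0(x) C_p$ for all $x$, together with (ii) that $C_{p_0} < 1$. Since $E$ is Fr\'echet, it suffices to verify the estimate for the fundamental system $(p_k)_k$, because every continuous seminorm is dominated by some $p_k$. The natural choice is $p_0 = p_1$, which is exactly the seminorm whose dual norm $p_1^\ast$ appears in the hypothesis.

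First I would bound each term $|(x_j' - y_j')(x)|$ by $p_1^\ast(x_j' - y_j')\, p_1(x)$, which is the defining property of the dual norm $p_1^\ast$. This pulls out the factor $p_1(x)$ uniformly in $j$, giving
\begin{equation*}
\sum_{j=1}^\infty |(x_j' - y_j')(x)|\, p_k(x_j) \leq p_1(x) \sum_{j=1}^\infty p_1^\ast(x_j' - y_j')\, p_k(x_j).
\end{equation*}
Thus with $p_0 = p_1$ the required constant is $C_{p_k} = \sum_{j=1}^\infty p_1^\ast(x_j' - y_j')\, p_k(x_j)$, and the whole argument reduces to showing that this series converges for every $k$ and that $C_{p_1} < 1$. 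Here the peculiar-looking denominator in the hypothesis does the work: from $p_1^\ast(x_j' - y_j') < 1/(1 + j^2 p_j(x_j) + 3^j p_1(x_j))$ we get, for a fixed $k$, that once $j \geq k$ the seminorms are monotone so $p_k(x_j) \leq p_j(x_j)$, whence $p_1^\ast(x_j' - y_j')\, p_k(x_j) \leq p_1^\ast(x_j' - y_j')\, p_j(x_j) < 1/j^2$ for the tail; the finitely many initial terms $j < k$ are harmless. This yields convergence of $C_{p_k}$ for each $k$.

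For condition (ii) I would take the case $k = 1$ separately: here $p_1^\ast(x_j' - y_j')\, p_1(x_j) < p_1(x_j)/(1 + 3^j p_1(x_j)) \leq 3^{-j}$, so $C_{p_1} = \sum_{j=1}^\infty p_1^\ast(x_j' - y_j')\, p_1(x_j) < \sum_{j=1}^\infty 3^{-j} = 1/2 < 1$, as required. With $p_0 = p_1$, $C_{p_1} < 1$, and all $C_{p_k}$ finite, the hypotheses of Theorem \ref{perturb}(2) are met, and it produces the desired sequence $(y_j)_j \subset E$ making $(\{y_j'\}, \{y_j\})$ an atomic decomposition.

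The only mild obstacle is bookkeeping with the two different roles played by the denominator: the $j^2 p_j(x_j)$ term secures summability of the general tail against an arbitrary $p_k$ (using seminorm monotonicity to control $p_k(x_j)$ by $p_j(x_j)$ for large $j$), while the separate $3^j p_1(x_j)$ term forces the strict inequality $C_{p_1} < 1$ needed for invertibility of $I - T$. I expect no genuine difficulty beyond confirming that every continuous seminorm reduces to the fundamental system and that the estimate for $p_k$ need only hold up to a finite modification of the initial indices, since reindexing and absorbing finitely many terms does not affect either convergence or the $C_{p_1} < 1$ bound.
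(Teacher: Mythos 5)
Your proposal is correct and follows exactly the route the paper intends: the corollary is stated as an immediate consequence of Theorem~\ref{perturb}(2), and your choice $p_0=p_1$ with $C_{p_k}=\sum_{j=1}^\infty p_1^\ast(x_j'-y_j')\,p_k(x_j)$, using $p_k(x_j)\leq p_j(x_j)$ for $j\geq k$ to get the $1/j^2$ tail bound and the $3^j p_1(x_j)$ term to force $C_{p_1}\leq 1/2<1$, is precisely how the hypothesis was engineered to be used. The handling of the degenerate cases ($p_j(x_j)=0$) and the reduction of arbitrary $p\in cs(E)$ to the fundamental system are both sound, so nothing is missing.
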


\par\medskip\noindent
Given an atomic decomposition $(\{x_j'\}, \{x_j\} )$ on a complete lcs $E,$ if $x_1'(x_1)\neq 1$ the map $x \to \sum_{j=2}^\infty x_j'(x)x_j$ is invertible as $1$ is not an eigenvalue of the rank one operator $x \to  x_1'(x)x_1$; see \cite[p. 207]{047}. Hence there exists $(y_j')_j\subset E'$ such that $(\{y_j'\}_j, \{x_{j+1}\}_j)$ is an atomic decomposition and similarly there exists $(y_j)_j\subset E$ such that $(\{x'_{j+1}\}_j, \{y_j\}_j)$ is an atomic decomposition. That is, we can remove an element and still obtain atomic decompositions. We recall that for a Schauder basis $(x_j)_j$ with functional coefficients $(x'_j)_j$ one has $x_1'(x_1) = 1.$

\section{Duality of atomic decompositions}

Given an atomic decomposition $ \left( \{ x_j' \} , \{ x_j \} \right) $ of $E$ it is rather natural to ask whether $ \left( \{ x_j \} , \{ x_j' \} \right) $ is an atomic decomposition of $E'.$ This is always the case when $E'$ is endowed with the weak* topology $\sigma(E',E)$.

 \begin{lemma} \label{lem05}
If $ \left( \{ x_j' \} , \{ x_j \} \right) $ is an atomic decomposition of $E$, then $ \left( \{ x_j \} , \{ x_j' \} \right) $ is an atomic decomposition of $\left(E',\sigma\left(E',E\right)\right)$.
\end{lemma}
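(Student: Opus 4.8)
The plan is to unravel what it means for $(\{x_j\}, \{x_j'\})$ to be an atomic decomposition of $(E', \sigma(E',E))$ and then reduce everything to the defining property of the given decomposition. First I would recall the standard fact that the dual of $(E', \sigma(E',E))$ is $E$ under the canonical pairing, so that each $x_j \in E$ is a legitimate $\sigma(E',E)$-continuous linear functional on $E'$, acting by $x' \mapsto \langle x', x_j \rangle = x'(x_j)$, while each $x_j'$ lies in the space $E'$ itself. Thus the two sequences occupy the correct roles in the definition, and what must be shown is that for every $x' \in E'$,
\begin{equation*}
x' = \sum_{j=1}^\infty x'(x_j)\, x_j',
\end{equation*}
the series converging in $\sigma(E',E)$.

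The next step is to reduce weak* convergence to scalar convergence. By the definition of $\sigma(E',E)$, the partial sums $s_n := \sum_{j=1}^n x'(x_j)\, x_j'$ converge to $x'$ in this topology precisely when $\langle s_n, x \rangle \to \langle x', x \rangle$ for every fixed $x \in E$. So I would fix $x \in E$ and write out
\begin{equation*}
\langle s_n, x \rangle = \sum_{j=1}^n x'(x_j)\, x_j'(x) = \sum_{j=1}^n x_j'(x)\, x'(x_j).
\end{equation*}

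Finally, I would invoke the hypothesis: since $(\{x_j'\}, \{x_j\})$ is an atomic decomposition of $E$, we have $x = \sum_{j=1}^\infty x_j'(x)\, x_j$ with convergence in $E$. Applying the continuous linear functional $x'$ and commuting it with the limit of the (convergent) partial sums yields $x'(x) = \sum_{j=1}^\infty x_j'(x)\, x'(x_j) = \lim_n \langle s_n, x \rangle$. As $x \in E$ was arbitrary, $s_n \to x'$ in $\sigma(E',E)$, which is exactly the assertion.

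I do not expect a genuine obstacle here. The only things that could go wrong are a mismatch in the roles of the two sequences or an unjustified interchange of $x'$ with the summation, and both are handled cleanly: the pairing $\langle E', E \rangle$ identifies the functionals correctly, and the interchange is nothing more than the continuity of $x'$ applied to a series that already converges in $E$ by assumption. The genuine conceptual content is that passing to the weak* topology is exactly what trivializes the problem, since it reduces convergence in $E'$ to termwise scalar convergence governed by the original decomposition; the stronger strong-dual statement, which is addressed separately, is where the real work lies.
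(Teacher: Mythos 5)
Your proposal is correct and follows essentially the same argument as the paper: both fix $x \in E$, apply the continuous functional $x'$ to the convergent expansion $x = \sum_{j} x_j'(x)\,x_j$, and conclude termwise that $x' = \sum_{j} x'(x_j)\,x_j'$ in $\sigma(E',E)$. Your version merely makes explicit the identification of $E$ with the dual of $(E',\sigma(E',E))$, which the paper leaves implicit.
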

\begin{proof}
For every $x' \in E'$ and $x \in E$ we have
     \begin{equation*}
        x'\left(x\right) = x'\left( \sum_{ j = 1}^{\infty} x_j'\left(x\right)x_j \right) = \sum_{j = 1}^{\infty} x_j'\left(x\right)x'\left(x_j\right) = \left(\sum_{j=1}^{\infty} x'\left(x_j\right)x_j' \right)\left(x\right),
    \end{equation*}
and $x' = \sum_{j=1}^{\infty} x'\left(x_j\right)x_j'$ with convergence in $\left(E', \sigma\left(E',E\right)\right)$.
\end{proof}

We investigate conditions to ensure that $ \left( \{ x_j \} , \{ x_j' \} \right) $ is an atomic decomposition of the strong dual $\left(E',\beta\left(E',E\right)\right)$ of $E$.  Moreover we investigate the relation between the existence of certain  atomic decompositions and reflexivity. We recall that in the  case of bases this questions lead to the concept of  shrinking basis and boundedly complete basis; see \cite{006}.

Given an atomic decomposition $\left( \{ x_j' \} , \{ x_j \} \right)$ of a lcs $E$ we denote,  for each $n \in \N$,
$\begin{displaystyle}
T_n\left(x\right) := \sum_{ j = n+1 }^{\infty} x_j'\left(x\right) x_j,\end{displaystyle}$
that is a continuous linear operator on $E$.

\begin{definition}\label{prop04}
\begin{enumerate}\item An atomic decomposition $\left( \{ x_j' \} , \{ x_j \} \right)$ of a lcs $E$ is said to be \textit{shrinking} if, for all $ x' \in E',$ $$\lim_{n \rightarrow \infty} x' \circ T_n = 0$$ uniformly on the bounded subsets of $E.$

\item An atomic decomposition $ \left( \{ x_j' \} , \{ x_j \} \right) $ of  a lcs $E$ is said to be \textit{boundedly complete} if the series $\sum_{ j = 1}^{\infty} x'' \left(x_j'\right) x_j $ converges in $E$ for every $x'' \in E''.$
\end{enumerate}
\end{definition}

\begin{proposition}\label{prop03}
Let $E$ be a lcs and let  $\left( \{ x_j' \} , \{ x_j \} \right) $ be an atomic decomposition of $E$. The following are equivalent:
\begin{itemize}
\item[\rm (1)] $ \left( \{ x_j \} , \{ x_j' \} \right) $ is an atomic decomposition for $E_{\beta}'$.
\item[\rm (2)] For all $x' \in E'$, $ \sum_{ j = 1 }^{\infty} x'\left(x_j\right) x_j'$ is convergent in $E_{\beta}'$.
\item[\rm (3)] $ \left( \{ x_j' \} , \{ x_j \} \right) $ is shrinking.
\end{itemize}

Moreover, if $ \left( \{ x_j' \} , \{ x_j \} \right) $  is a shrinking atomic decomposition of $E$, then $ \left( \{ x_j \} , \{ x_j' \} \right) $ is a boundedly complete atomic decomposition of $E'_\beta$.
\end{proposition}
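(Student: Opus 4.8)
The plan is to prove the equivalence of (1), (2), (3) in a cycle, and then deduce the final statement about bounded completeness of the dual decomposition separately. I would begin by unwinding the definitions in terms of the operators $T_n$. The key observation is that for $x' \in E'$, the partial sums $\sum_{j=1}^{n} x'(x_j) x_j'$ act on any $x \in E$ as $x'\bigl(\sum_{j=1}^n x_j'(x) x_j\bigr) = x'(x) - (x' \circ T_n)(x)$. Hence the tail of the series $\sum_j x'(x_j) x_j'$ evaluated at $x$ is exactly $(x' \circ T_n)(x)$, and convergence of $\sum_j x'(x_j)x_j'$ to $x'$ in the strong dual $E'_\beta$ means precisely that $\sup_{x \in B}\abs{(x' \circ T_n)(x)} \to 0$ for every bounded set $B \subset E$, which is the definition of shrinking. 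This identity is the engine of the whole proposition.

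With that identity in hand, the implications are short. For $(3) \Rightarrow (1)$: shrinking says $x' \circ T_n \to 0$ uniformly on bounded sets, i.e.\ in $E'_\beta$, so by the displayed identity the partial sums $\sum_{j=1}^n x'(x_j)x_j'$ converge to $x'$ in $E'_\beta$, giving the reproducing formula for $E'_\beta$; since this holds for all $x' \in E'$, $(\{x_j\},\{x_j'\})$ is an atomic decomposition of $E'_\beta$. The implication $(1) \Rightarrow (2)$ is immediate because an atomic decomposition of $E'_\beta$ is by definition a convergent series expansion in $E'_\beta$. For $(2) \Rightarrow (3)$: if $\sum_j x'(x_j)x_j'$ converges in $E'_\beta$, I must first check that its sum is $x'$; this follows because $E'_\beta$ carries a finer topology than $\sigma(E',E)$, and Lemma \ref{lem05} already identifies the weak* limit as $x'$, so the strong limit (if it exists) must agree. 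Then convergence of the series forces its tails to vanish in $E'_\beta$, which via the identity is exactly the shrinking condition. I would present the cycle as $(3)\Rightarrow(1)\Rightarrow(2)\Rightarrow(3)$.

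For the final assertion, suppose $(\{x_j'\},\{x_j\})$ is shrinking; by the equivalence just proved, $(\{x_j\},\{x_j'\})$ is an atomic decomposition of $E'_\beta$, so it makes sense to ask whether it is boundedly complete. Writing the roles out, bounded completeness of $(\{x_j\},\{x_j'\})$ on the space $F := E'_\beta$ requires that $\sum_j \xi(x_j) x_j$ converge in $F = E'_\beta$ for every $\xi \in F'' = (E'_\beta)''$. Here the coefficient functionals of the dual decomposition are the $x_j$ viewed as elements of $E'' = (E'_\beta)'$, evaluated by $\xi$. Now for any $\xi \in (E'_\beta)''$, restricting $\xi$ to the canonical image of $E$ inside $(E'_\beta)'$ is irrelevant; what I need is that $\sum_j \langle x_j, x_j'\rangle$-type partial sums, i.e.\ $\sum_{j=1}^n \xi(x_j) x_j'$, converge in $E'_\beta$. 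The clean way to see this is to test against $E$: for $x \in E \subset (E'_\beta)'$ one has $\sum_{j=1}^n x(x_j) x_j' = \sum_{j=1}^n x_j'(x) \cdot \widehat{x_j}$ summed in the appropriate duality, and the shrinking hypothesis controls the tails uniformly on bounded sets of $E'$. I expect the main obstacle to be bookkeeping the three levels of duality $E, E'_\beta, (E'_\beta)''$ correctly and confirming that the general $\xi \in (E'_\beta)''$ does not produce worse behaviour than elements of $E$; the resolution should be that the equicontinuity coming from barrelledness (or directly from the atomic decomposition structure as in Theorem \ref{thrm01}) bounds the operators $T_n'$ uniformly, so the tails of $\sum_j \xi(x_j)x_j'$ are majorised exactly as in the shrinking estimate and therefore converge in $E'_\beta$.
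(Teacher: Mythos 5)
Your treatment of the equivalence of (1)--(3) is correct and is essentially the paper's own argument: the identity $\bigl(\sum_{j=1}^{n} x'(x_j)x_j'\bigr)(x) = x'(x) - (x'\circ T_n)(x)$ is exactly the engine the paper uses, and your use of Lemma \ref{lem05} in $(2)\Rightarrow(3)$ to identify the strong limit with $x'$ matches the paper's step. (Minor slip: you first state bounded completeness of the dual decomposition as convergence of $\sum_j \xi(x_j)x_j$; it should be $\sum_j \xi(x_j)x_j'$, as you later write.)

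The final assertion, however, is left with a genuine gap, and the mechanism you sketch to close it would not work. Bounded completeness of $(\{x_j\},\{x_j'\})$ on $F:=E'_\beta$ requires that $\sum_j \xi(x_j)x_j'$ converge \emph{in} $E'_\beta$ for every $\xi\in F''=E'''$, where each coefficient functional $x_j$ is the canonical image of an element of $E$ inside $F'=E''$. The crux --- which you explicitly dismiss as ``irrelevant'' --- is precisely that $\xi(x_j)$ depends only on the restriction $x':=\xi|_E$: since every $x_j$ lies in the canonical copy of $E$, the series to be summed is literally $\sum_j x'(x_j)x_j'$, which converges in $E'_\beta$ (to $x'$) by the shrinking hypothesis via the equivalence just proved. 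That one-line reduction is the paper's entire proof of the ``moreover'' part. Your alternative, ``testing against $E$'', confuses the duality levels: elements of $E$ sit in $(E'_\beta)'$, not in $(E'_\beta)''$, so pairing the series with them could at best control weak convergence of the partial sums in $E'_\beta$, not the convergence in $E'_\beta$ that bounded completeness demands. Likewise, the appeal to equicontinuity of $(T_n')$ via barrelledness invokes a hypothesis the proposition does not contain (it assumes only a lcs), and even granted, a uniform bound on the tails is not convergence --- you would still have no candidate limit. (One caveat your instinct correctly flags, though it affects the paper equally: writing $x'=\xi|_E\in E'$ silently uses that the restriction of $\xi\in(E'_\beta)''$ to the canonical copy of $E$ is continuous on $E$, which holds e.g.\ when $E$ is quasi-barrelled, as in all the spaces the paper cares about.) So your last paragraph should be replaced by the restriction argument; as written it does not prove the claim.
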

\begin{proof}
$(1) \Rightarrow (2)$ is clear by the definition of atomic decomposition.

 $(2) \Rightarrow (3)$  From the assumption and  lemma \ref{lem05}, $x'=\sum_{ j = 1 }^{\infty} x'\left(x_j\right) x_j'$  in  the topology $\beta\left(E',E\right).$ As $x'\circ T_n =\sum_{ j = n+ 1 }^{\infty} x'\left(x_j\right) x_j'$ we conclude.

Finally, we prove $(3) \Rightarrow (1).$ Every $x'\in E'$ can be written as $x'=\sum_{ j = 1 }^{\infty} x'\left(x_j\right) x_j'$ with convergence in the weak* topology $\sigma\left(E',E\right).$ Given a bounded set $B$ in $E,$ $${\rm sup}_{x \in B}\left|\left( x'-\sum_{ j = 1 }^{n} x'\left(x_j\right) x_j'\right) (x )\right|= {\rm sup}_{x \in B}|x'\circ T_n (x)|$$
which tends to zero, hence  $x'=\sum_{ j = 1 }^{\infty} x'\left(x_j\right) x_j'$  in  the topology $\beta\left(E',E\right).$

Finally, if  $ \left( \{ x_j' \} , \{ x_j \} \right) $ is a shrinking atomic decomposition of $E$, then $ \left( \{ x_j \} , \{ x_j' \} \right) $ is an atomic decomposition of $E'_\beta$. Moreover, given $x''' \in E'''$ set $x':=x'''|_E$ to obtain
     \begin{equation*}
        \sum_{ j = 1}^{\infty} x'''\left(x_j\right) x_j' =  \sum_{ j = 1}^{\infty} \left(\left.x'''\right|_E\right)\left(x_j\right) x_j'  = \sum_{ j = 1}^{\infty} x'\left(x_j\right) x_j' = x'.
    \end{equation*}

    \end{proof}

\par\medskip\noindent
Recall that a boundedly complete Schauder basis $\left (e_j\right )_j$ in a lcs $E$ is a  basis such that if $\left (\alpha_j\right )_j \in \omega$ and $ \left( \sum_{ j = 1 }^{k} \alpha_j e_j \right)_k $ is bounded,  then $\sum_{ j = 1 }^{\infty} \alpha_j e_j $ is convergent.
\par\medskip\noindent
In \cite{005} it is shown that a basis $\left (e_j\right )_j$ in a Banach space  $X$ is boundedly complete if and only if the atomic decomposition $ \left( \{ e_j' \} , \{ e_j \} \right) $ is boundedly complete. This extends to arbitrary barrelled spaces.

\begin{proposition}\label{prop05}
Let $E$ be a barrelled lcs with a Schauder basis $\left( e_j \right)_j $. Then the following are equivalent:
\begin{itemize}
\item[\rm (1)] The basis is boundedly complete.
\item[\rm (2)] The atomic decomposition $ \left( \{ e_j' \} , \{ e_j \} \right) $ is boundedly complete.
\end{itemize}
\end{proposition}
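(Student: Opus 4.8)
The plan is to prove the two implications separately, using the definitions and the characterization of boundedly complete atomic decompositions from Definition \ref{prop04}. Throughout I would use that a Schauder basis $(e_j)_j$ has coefficient functionals $(e_j')_j$ satisfying $e_j'(e_i) = \delta_{ij}$, so that $x'' \mapsto \sum_j x''(e_j') e_j$ is the natural candidate series to examine, and I would exploit the canonical embedding $E \hookrightarrow E''$ together with the fact that the partial-sum projections $S_n(x) := \sum_{j=1}^n e_j'(x)e_j$ are equicontinuous (because $E$ is barrelled, by Banach--Steinhaus).

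For the implication $(2) \Rightarrow (1)$, suppose the atomic decomposition is boundedly complete and take $(\alpha_j)_j \in \omega$ with $\big(\sum_{j=1}^k \alpha_j e_j\big)_k$ bounded in $E$. First I would produce from this bounded sequence of partial sums a single element $x'' \in E''$ whose action recovers the $\alpha_j$ as $x''(e_j') = \alpha_j$. The idea is that the partial sums $s_k := \sum_{j=1}^k \alpha_j e_j$, viewed inside $E''$ via the canonical embedding, form a bounded net in the bidual; using that bounded subsets of $E''$ are relatively $\sigma(E'',E')$-compact when $E$ is barrelled (so that $E'_\beta$ is such that its strong dual has this compactness, via barrelledness one gets that the closed bounded absolutely convex hull behaves well), I would extract a weak* cluster point $x'' \in E''$ of $(s_k)_k$. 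Then $x''(e_j') = \lim_k s_k(e_j') = \lim_k \sum_{i=1}^k \alpha_i e_i'(e_j') = \alpha_j$ since eventually $k \geq j$. By boundedly completeness the series $\sum_j x''(e_j') e_j = \sum_j \alpha_j e_j$ converges in $E$, which is exactly condition (1).

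For the converse $(1) \Rightarrow (2)$, assume the basis is boundedly complete and let $x'' \in E''$ be arbitrary; I must show $\sum_j x''(e_j') e_j$ converges in $E$. Setting $\alpha_j := x''(e_j')$, by the boundedly complete basis property it suffices to check that the partial sums $\sigma_k := \sum_{j=1}^k \alpha_j e_j$ form a bounded subset of $E$. Here I would use the equicontinuity of the partial sum operators $S_n$ on $E$ (from barrelledness) and transpose it: the adjoints $S_n'$ act on $E'$, and the relation $\sigma_k = \sum_{j=1}^k x''(e_j')e_j$ can be paired against any $x' \in E'$ to give $x'(\sigma_k) = \sum_{j=1}^k x''(e_j') x'(e_j) = x''(S_k' x')$. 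Since $\{S_k' x' : k\}$ is bounded in $E'_\beta$ (by equicontinuity of $(S_k)_k$, the adjoints are equicontinuous hence map bounded sets to bounded sets, and $S_k' x' \to x'$ strongly when the basis is also relevant, but boundedness is all I need), and $x''$ is continuous on $E'_\beta$ and thus bounded on bounded sets, the scalars $x'(\sigma_k)$ are bounded uniformly in $k$ for each fixed $x'$; weak boundedness then yields genuine boundedness of $(\sigma_k)_k$ in $E$ because $E$ is barrelled (weakly bounded equals bounded). Boundedly completeness of the basis finishes the convergence.

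The main obstacle I anticipate is the extraction of the element $x''$ in the direction $(2)\Rightarrow(1)$: in a general barrelled (non-metrizable, non-reflexive) setting one cannot simply invoke sequential weak* compactness, so I would need to justify carefully that a bounded sequence of partial sums, sitting in the bidual, has a $\sigma(E'',E')$-cluster point and that this cluster point has the prescribed coefficients $x''(e_j') = \alpha_j$. The cleanest route is to define $x''$ directly as a suitable limit functional on $E'$ along an ultrafilter or cofinal subnet refining the bounded net $(s_k)_k$, verify it is $\beta(E',E)$-continuous (using that the $s_k$ lie in a fixed bounded, hence equicontinuous-predual, set so the limit is bounded on bounded sets of $E'$), and only then read off its coefficients; the pairing with each $e_j'$ stabilizes for large $k$, which is what makes $x''(e_j') = \alpha_j$ fall out.
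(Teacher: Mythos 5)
Your proposal is correct and follows essentially the same route as the paper's proof in both directions: for $(1)\Rightarrow(2)$ the identical pairing computation $x'(\sigma_k)=x''\bigl(\sum_{j=1}^k x'(e_j)e_j'\bigr)$ with Banach--Steinhaus/barrelledness giving $\beta(E',E)$-boundedness and Mackey's theorem (weakly bounded $=$ bounded, which holds in any lcs, barrelledness not needed there) giving boundedness of the partial sums; and for $(2)\Rightarrow(1)$ the same extraction of a $\sigma(E'',E')$-cluster point $x''$ of the bounded partial sums with the stabilizing coordinates $x''(e_j')=\alpha_j$. The compactness you worry about needs no sequential argument and no ultrafilter machinery beyond what you sketch: bounded subsets of $E$ are $\beta(E',E)$-equicontinuous as subsets of $E''$, hence relatively $\sigma(E'',E')$-compact by Alaoglu--Bourbaki, and a mere cluster point suffices since the scalar sequence $e_j'\bigl(\sum_{i=1}^k\alpha_ie_i\bigr)$ is eventually constant equal to $\alpha_j$ --- which is exactly how the paper argues.
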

\begin{proof}
   To prove $(1) \Rightarrow (2)$ we fix $x'' \in E'' $ and we prove that $ \sum_{ j = 1 }^{\infty} e_j'\left(x''\right) e_j$ converges in $E$. For every $x'\in E'$ and $x\in E$ we have
$$
\lim_{k\to \infty}\left( \sum_{ j = 1}^k x'\left( e_j\right)e_j' \right)\left(x\right) = \lim_{k\to \infty}x'\left( \sum_{ j = 1}^k e_j'\left(x\right)e_j \right) = x'(x).$$ Since $E$ is barrelled we conclude that $\left\{ \sum_{ j = 1 }^k x'\left(e_j\right) e_j', k \in \N \right\}$ is $\beta(E',E)$-bounded. Consequently $\left\{  \sum_{ j = 1 }^{k} x''\left(e_j'\right)x'(e_j), k \in \N \right\}$ is a bounded set of scalars for every $x'\in E',$ which means that $\left\{  \sum_{ j = 1 }^{k} x''\left(e_j'\right)e_j, k \in \N \right\}$ is $\sigma(E,E')$-bounded. As all topologies of the same dual pair have the same bounded sets (\cite[8.3.4]{006}) we finally obtain that $\left\{  \sum_{ j = 1 }^{k} x''\left(e_j'\right)e_j, k \in \N \right\}$ is a bounded subset of $E$ and the conclusion follows.
\par\medskip

   To prove $(2) \Rightarrow (1)$ we fix $\left(\alpha_j\right)_j \subset \K $ such that $\left( \sum_{j = 1}^k \alpha_j e_j \right)_k $ is bounded and we show that $ \sum_{ j = 1}^{\infty} \alpha_j e_j $ is convergent in $E$. Since $\left( \sum_{j = 1}^k \alpha_j e_j \right)_k $ is $\sigma\left(E'', E'\right)$-relatively compact then it has a $\sigma\left(E'', E'\right)$-cluster point $x'' \in E''.$ By hypothesis, $\sum_{j = 1}^{\infty} x''\left( e_j' \right) e_j $ is convergent in $E$, so to conclude it suffices to check that $x''\left(e_j'\right) = \alpha_j.$ To this end we fix $j \in \N$ and $k > j$ and observe that
      \begin{equation*}
           e_j'\left( \sum_{i = 1}^k \alpha_i e_i \right) = \sum_{ i= 1}^k \alpha_i e_j'\left(e_i\right) = \alpha_j.
   \end{equation*}
   As $x''(e_j)$ is a cluster point of $\left\{e_j'\left( \sum_{i = 1}^k \alpha_i e_i \right)\right\}_{k=1}^\infty$ we finally deduce $x''\left(e_j'\right) = \alpha_j.$ \end{proof}

\begin{remark}\label{prop005}
Let $ \left( \{ x_j' \} , \{ x_j \} \right) $ be an atomic decomposition of $E$ the let $P: E \rightarrow E$ be a continuous linear projection. It is easy to see that if  $ \left( \{ x_j' \} , \{ x_j \} \right) $ is  shrinking (boundedly complete) then $ \left( \{ P'(x_j') \} , \{ P\left(x_j\right) \} \right) $ is a shrinking (boundedly complete) atomic decomposition for $P\left(E\right)$.
\end{remark}

\begin{lemma}\label{lem07}
Suppose that $ \left( \{ x_j' \} , \{ x_j \} \right) $ is an atomic decomposition of a barrelled lcs $E$ such that for all $k \in \N$
\begin{equation}\label{eq06}
\lim_{ n \rightarrow \infty} \left(x_k' - \sum_{ j = 1}^n x_k'\left(x_j\right) x_j'\right) = 0 \mbox{ in } E'_\beta.
\end{equation}
Then $ \left( \{ x_j \} , \{ x_j' \} \right) $ is an atomic decomposition of the closed linear span $H = \overline{{\rm span}\left\{x_j'\right\}}^{E'_\beta}$.
\end{lemma}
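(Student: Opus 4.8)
```latex
The plan is to show that $H=\overline{\operatorname{span}\{x_j'\}}^{E'_\beta}$ admits the claimed atomic decomposition by verifying that for every $x'\in H$ one has $x'=\sum_{j=1}^\infty x'(x_j)x_j'$ with convergence in $E'_\beta$. The natural strategy is to first establish the reproduction formula on the dense subset $\operatorname{span}\{x_k'\}$, where hypothesis \eqref{eq06} applies directly, and then to propagate it to the whole closed span by an equicontinuity/uniform-convergence argument, for which barrelledness of $E$ is the essential tool.

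First I would treat the generators. Fix $k\in\N$. Evaluating the functional $x_k'$ against the atomic decomposition, for every $x\in E$ we have $x_k'(x)=x_k'\bigl(\sum_{j=1}^\infty x_j'(x)x_j\bigr)=\sum_{j=1}^\infty x_j'(x)\,x_k'(x_j)=\bigl(\sum_{j=1}^\infty x_k'(x_j)x_j'\bigr)(x)$, so $x_k'=\sum_{j=1}^\infty x_k'(x_j)x_j'$ at least weakly*; hypothesis \eqref{eq06} upgrades this to convergence in $E'_\beta$. By linearity the same reproduction formula $x'=\sum_{j=1}^\infty x'(x_j)x_j'$ holds in $E'_\beta$ for every $x'$ in $\operatorname{span}\{x_k'\}$. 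Note that for such $x'$ the partial-sum operators $R_n(x'):=\sum_{j=1}^n x'(x_j)x_j'$ agree with $x'\circ S_n$ where $S_n(x)=\sum_{j=1}^n x_j'(x)x_j$, by the same computation as in Lemma \ref{lem05}.

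Next I would pass to the closure. The partial-sum maps $R_n:E'_\beta\to E'_\beta$, $R_n(x')=\sum_{j=1}^n x'(x_j)x_j'=x'\circ S_n$, are continuous, and since $E$ is barrelled the operators $S_n$ are equicontinuous on $E$ (Banach--Steinhaus); consequently the family $(R_n)_n$ is equicontinuous on $E'_\beta$ as a family of maps into $E'_\beta$. Given an arbitrary $x'\in H$ and $\eps>0$, approximate $x'$ in $E'_\beta$ by some $y'\in\operatorname{span}\{x_k'\}$ and split
\begin{equation*}
x'-R_n(x')=(x'-y')+(y'-R_n(y'))+R_n(y'-x').
\end{equation*}
The first term is small by the choice of $y'$; the middle term tends to $0$ because the formula already holds on the dense subspace; the last term is controlled uniformly in $n$ by the equicontinuity of $(R_n)_n$. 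Choosing a continuous seminorm on $E'_\beta$ adapted to a given bounded set of $E$, each of the three pieces is made small, so $R_n(x')\to x'$ in $E'_\beta$, which is exactly the statement that $\bigl(\{x_j\},\{x_j'\}\bigr)$ is an atomic decomposition of $H$.

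The main obstacle I expect is the uniform control of the tail term $R_n(y'-x')$: one must convert the norm/seminorm approximation of $x'$ by $y'$ in $E'_\beta$ into a bound that is independent of $n$, and this is precisely where the equicontinuity of $(S_n)_n$ furnished by barrelledness does the work. Concretely, the bounded sets of $E$ on which $\beta(E',E)$-seminorms are computed must be matched, via the equicontinuity estimate $p(S_n x)\le p'(x)$, to a single bounded set; the care lies in verifying that the resulting seminorm estimates compose correctly so that the three-$\eps$ argument closes uniformly over $n$.
```
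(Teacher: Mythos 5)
Your proposal is correct and coincides with the paper's own argument: the paper likewise works with the transposes $F_n'$ of the partial-sum operators (your $R_n$), invokes barrelledness to get equicontinuity of $(F_n')_n$ on $E'_\beta$, and closes with exactly your three-term splitting $x' - F_n'(x') = (x'-u) - F_n'(x'-u) + (u - F_n'(u))$ for a finite combination $u \in {\rm span}\{x_k'\}$, using \eqref{eq06} on the middle term. Your additional justification of why equicontinuity of $(S_n)_n$ transfers to $(R_n)_n$ is a detail the paper leaves implicit, but the approach is the same.
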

\begin{proof}
We fix $x' \in H$ and show that $x' = \sum_{ j = 1}^{\infty} x'\left(x_j\right)x_j'$ with convergence in $E'_{\beta}$. To this end we fix $U$ a neighborhood of zero in $E'_{\beta}$ and consider $F_n(x) = \sum_{j=1}^nx_j'(x)x_j,$ $n\in \N, x\in E$. Since $\left(F_n'\right)_n \subset L\left(E'\right)$ is equicontinuous, there is another $\beta\left(E',E\right)$-neighborhood $V,$ $V \subset U$, such that $F_n'\left(V\right) \subset \frac{1}{3} U$ for each $n \in \N$. Find $u = \sum_{ k = 1}^{s} \alpha_k x_k', \alpha_k \in \K, s \in \N,$ with $x' - u \in \frac{1}{3}V$. By condition $\left(\ref{eq06}\right)$ we can find $n_0 \in \N$ such that $u - F_n'\left(u\right) \in \frac{1}{3}V$ for each $n \geq n_0$. Finally,
\begin{equation*}
x' - F_n'\left(x'\right) = x' - u - F_n'\left(x' - u\right) + u - F_n'\left(u\right) \in \dfrac{1}{3}V + \dfrac{1}{3}U + \dfrac{1}{3}V \subset U \mbox{ if } n \geq n_0.
\end{equation*}
Thus $\begin{displaystyle}E'_\beta\mbox{-}\lim_{n \rightarrow \infty} F_n'\left(x'\right) = x'\end{displaystyle}$ and the conclusion follows.
\end{proof}

\begin{remark}
\begin{description}
  \item[{\rm (a)}] Observe that if $ \left( \{ x_j \} , \{ x_j' \} \right) $ is an atomic decomposition of the closed linear span $H = \overline{{\rm span}\left\{x_j'\right\}}^{E'_{\beta}}$ then $\left(\ref{eq06}\right)$ holds since $x_k' \in H$ for each $k \in \N$.
  \item[{\rm (b)}] If $\{x_j \}$ is a Schauder basis in $E$ with functional coefficients $\{x_j' \}$ then $\left(\ref{eq06}\right)$ also holds, since $x_k'- \sum_{j =1}^n x_k'\left(x_j\right)x_j' = 0$ for every $n \geq k$.
  \item[{\rm (c)}] If $E$ is a Montel space, $\left(\ref{eq06}\right)$ holds since every weakly convergent sequence in a Montel space is also strongly convergent to the same limit, by \cite[11.6.2]{006}.
\end{description}
\end{remark}

\begin{theorem}\label{prop07} Let $\left( \{ x_j' \} , \{ x_j \} \right)$ be an atomic decomposition of a lcs $E.$ Then,
\begin{itemize}
\item[\rm (1)] If $ \left( \{ x_j' \} , \{ x_j \} \right) $ is  boundedly complete atomic decomposition, $E$ is a barrelled and complete lcs $E$ with $E_{\beta}''$ barrelled, then $E$ is complemented in its bidual $E_{\beta}''$.

\item[\rm (2)]  If $E$ is reflexive and  $\left(\ref{eq06}\right)$ in Lemma \ref{lem07} holds, then $ \left( \{ x_j' \} , \{ x_j \} \right) $ is shrinking.

\item[\rm (3)] If $ \left( \{ x_j' \} , \{ x_j \} \right) $ is shrinking and boundedly complete, then $E$ is semi-reflexive. If, in addition, $E$ is barrelled then it is reflexive.
\end{itemize}
\end{theorem}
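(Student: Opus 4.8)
The plan is to treat the three parts in turn, reducing each to the material already developed in this section. For (1), I would build the candidate projection explicitly. Since the decomposition is boundedly complete, for every $x'' \in E''$ the series $\sum_j x''(x_j') x_j$ converges in $E$, so I may define $P : E''_\beta \to E$ by $P(x'') = \sum_j x''(x_j') x_j$. Writing $P_n(x'') = \sum_{j=1}^n x''(x_j') x_j$, each $P_n$ is continuous from $E''_\beta$ into $E$, because $x'' \mapsto x''(x_j')$ is evaluation at the point $x_j' \in E'_\beta$ and a singleton is bounded in $E'_\beta$, so this map is $\beta(E'',E')$-continuous. The sequence $(P_n)$ converges pointwise to $P$, hence is pointwise bounded; since $E''_\beta$ is barrelled, Banach--Steinhaus makes $(P_n)$ equicontinuous and its pointwise limit $P$ therefore continuous. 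Identifying $E$ with its canonical image $J(E)\subset E''_\beta$ — a topological embedding because $E$ is barrelled, so the bounded subsets of $E'_\beta$ are exactly the equicontinuous ones — one checks $P\circ J = I_E$ (for $x\in E$ one has $J(x)(x_j')=x_j'(x)$, whence $P(J(x))=\sum_j x_j'(x)x_j = x$). Then $J\circ P$ is a continuous projection of $E''_\beta$ onto $E$.

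For (2), reflexivity of $E$ implies in particular that $E$ is barrelled, so Lemma \ref{lem07} applies and $(\{x_j\},\{x_j'\})$ is an atomic decomposition of $H := \overline{\operatorname{span}\{x_j'\}}^{E'_\beta}$. I claim $H = E'_\beta$. If not, by Hahn--Banach there is a nonzero $\xi \in (E'_\beta)'$ vanishing on $H$; reflexivity identifies $(E'_\beta)'$ with $E$, so $\xi = J(x)$ for some $x\neq 0$, and $\xi|_H = 0$ forces $x_j'(x)=0$ for all $j$. But then the original decomposition gives $x = \sum_j x_j'(x) x_j = 0$, a contradiction. Hence $H = E'_\beta$, so every $x'\in E'$ satisfies $x' = \sum_j x'(x_j) x_j'$ in $E'_\beta$, and by Proposition \ref{prop03} the decomposition is shrinking.

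For (3), given $x''\in E''$, bounded completeness lets me put $x := \sum_j x''(x_j') x_j \in E$, and I will show $J(x) = x''$. Since the decomposition is shrinking, Proposition \ref{prop03} expands each $x'\in E'$ as $x' = \sum_j x'(x_j) x_j'$ in $E'_\beta$. Applying the continuous form $x''$ to this convergent series gives $x''(x') = \sum_j x'(x_j) x''(x_j')$, while applying the continuous form $x'$ to the convergent series defining $x$ gives $x'(x) = \sum_j x''(x_j') x'(x_j)$; the two sums coincide, so $x''(x') = x'(x)$ for every $x'\in E'$, i.e. $x'' = J(x)$. Thus $J$ is onto and $E$ is semi-reflexive; if moreover $E$ is barrelled, then semi-reflexive together with barrelled gives reflexive.

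The step I expect to be the main obstacle is the continuity of $P$ in part (1): everything there hinges on having Banach--Steinhaus available on $E''_\beta$ (which is precisely why $E''_\beta$ is assumed barrelled) and on verifying that $E$ sits inside $E''_\beta$ as a topologically embedded copy, not merely an algebraic one, so that $J\circ P$ is a genuine projection in the locally convex sense. By contrast, parts (2) and (3) are essentially bookkeeping, combining the weak* expansion of Lemma \ref{lem05} with the strong-dual expansion supplied by Proposition \ref{prop03}, the only delicate point being the Hahn--Banach density argument that yields $H = E'_\beta$.
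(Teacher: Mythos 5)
Your proof is correct and follows essentially the same route as the paper: part (1) is the same construction of $P(x'')=\sum_j x''(x_j')x_j$ made continuous via Banach--Steinhaus on the barrelled space $E''_\beta$, with $E$ canonically and topologically embedded because it is barrelled, and part (3) is the identical computation showing $x''=J(x)$ by pairing the strong-dual expansion of $x'$ against the series defining $x$. The only cosmetic difference is in (2), where you obtain $H=E'_\beta$ by a Hahn--Banach annihilator argument through the identification $(E'_\beta)'=E$, whereas the paper notes that semi-reflexivity makes $\beta(E',E)$ and $\sigma(E',E)$ topologies of the same dual pair and invokes the weak* density from Lemma \ref{lem05}; these are equivalent formulations of the same density fact.
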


\begin{proof}

(1) Since $ \left( \{ x_j' \} , \{ x_j \} \right) $ is  boundedly complete the linear map $P:E'' \to E,\ P\left(x''\right) := \sum_{ j = 1 }^{\infty} x''\left(x_j'\right) x_j$ is well defined. Since $E''_\beta$ is barrelled we can apply Banach-Steinhaus theorem to conclude that $P$ is continuous, and it is clearly surjective. As $E$ is barrelled, it can be canonically identified with a topological subspace of its bidual $E_{\beta}''$. Then it is easy to see that $P$ is a projection.

(2) As $E$ is reflexive then it is barrelled and Lemmas \ref{lem05} and \ref{lem07} hold. In particular, for each $x' \in H = \overline{{\rm span}\{x_j'\}}^{E'_\beta}$ we have $x' = \sum_{j = 1}^{\infty} x'\left(x_j\right) x_j'$ with convergence in $E'_\beta$. Since $E$ is semi-reflexive, $\beta\left(E',E\right)$ and $\sigma\left(E',E\right)$ are topologies of the same dual pair. Hence, by Lemma $\ref{lem05}$ we obtain $H = \overline{{\rm span}\{x_j'\}}^{E'_\beta} = \overline{{\rm span}\{x_j'\}}^{\left(E', \sigma\left(E',E\right)\right)} = E'$. The result follows by Proposition \ref{prop03}.

(3) Fix $x'' \in E''$. Since the atomic decomposition is boundedly complete then $\sum_{ j = 1}^{\infty} x_j'\left(x''\right)x_j $ converges to an element $x\in E$. We claim that $x'' = x.$ In fact, since the atomic decomposition is shrinking, for every $x'\in E'$ we have $x' = \sum_{ j =1}^{\infty} x'\left(x_j\right) x_j'$ with convergence in $E'_\beta$. Thus
\begin{equation*}
\langle x'', x' \rangle = \langle x'', \sum_{ j =1}^{\infty} x'\left(x_j\right) x_j' \rangle = \sum_{ j =1}^{\infty} x'\left(x_j\right)x''\left(x_j'\right) = \left( \sum_{j = 1}^{\infty} x''\left(x_j'\right)x_j\right)\left(x'\right) = \langle x, x'\rangle.
\end{equation*} It follows $x'' = x.$
\end{proof}

For a Fr\'echet space $E$, the bidual $E_{\beta}''$ is again a Fr\'echet space, therefore barrelled. For LB-spaces, this is not always the case. In fact, if $\lambda_1(A)$ is the Grothendieck example of a non-distinguished Fr\'echet space, $\lambda_1(A)$ is the strong dual of an LB-space $E.$ The bidual of $E$, being the strong dual of $\lambda_1(A),$ is not barrelled. See \cite[Chapter 31, Sections 6 and 7]{018} and \cite[Example 27.19]{044}.

\section{Unconditional atomic decompositions} \label{uncond}

In this section we assume that $E$ is a complete lcs and we denote by $\mathcal{U}_0(E)$ the set of absolutely convex and closed 0-neighborhoods. We refer the reader to \cite{006} for unconditional convergence of series in locally convex spaces.

\begin{definition}
An atomic decomposition $ \left( \{ x_j' \} , \{ x_j \} \right) $ for a lcs $E$ is said to be \textit{unconditional} if for every $x \in E$ we have $ x =\sum_{ j = 1 }^{\infty} x_j'\left(x\right)x_j$ with unconditional convergence.
\end{definition}

\begin{remark}\label{remark1}
By  \cite[p.116]{011} a series $\sum_{j = 1}^{\infty} x_j$  in a (sequentially) complete lcs converges unconditionally if and only if the limits  $\begin{displaystyle}\lim_{n\to \infty} \sum_{j = 1}^{n} a_j x_j\end{displaystyle}$ exist uniformly for $\left(a_j \right)_j $ in the unit ball of $\ell_{\infty},$  and the operator $\ell_{\infty} \to E,\ \left\{ a_j \right\} \mapsto \sum_{ j = 1 }^{\infty} a_j x_j,$ is continuous.
\end{remark}

\begin{lemma}\label{prop09}
Let $X$ be a normed space, $E$ a barrelled space and $G$ any lcs. Then every separately continuous bilinear map $B: X \times E \rightarrow G$ is continuous.
\end{lemma}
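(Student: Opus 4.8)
The plan is to exploit the fact that a bilinear map between topological vector spaces is continuous as soon as it is continuous at the origin, and to produce an estimate of the form $q(B(x,e)) \le \norm{x}\, p(e)$ attaching to each $q \in cs(G)$ a suitable $p \in cs(E)$. The only genuine structural input is the Banach--Steinhaus theorem, which is available precisely because $E$ is barrelled; this is where the hypothesis on $E$ enters, and it is the heart of the argument.

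First I would fix the closed unit ball $B_X$ of $X$ and consider the family of linear maps $\{B(x,\cdot): x \in B_X\} \subset L(E,G)$. Each member is continuous by the separate continuity of $B$. I would then check that this family is pointwise bounded: for a fixed $e \in E$ the map $B(\cdot,e): X \to G$ is continuous and linear, so it carries the bounded set $B_X$ (bounded because $X$ is normed) to a bounded subset $\{B(x,e): x \in B_X\}$ of $G$. Here the normed hypothesis on $X$ is used only to guarantee that the unit ball is bounded and absorbs $X$.

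Since $E$ is barrelled, Banach--Steinhaus converts this pointwise boundedness into equicontinuity of the family $\{B(x,\cdot): x \in B_X\}$. Concretely, given $q \in cs(G)$ there is $p \in cs(E)$ with $q(B(x,e)) \le p(e)$ for all $x \in B_X$ and all $e \in E$. Using homogeneity in the first variable, for an arbitrary nonzero $x \in X$ one applies this to $x/\norm{x} \in B_X$ to obtain $q(B(x,e)) \le \norm{x}\, p(e)$ for all $x \in X$ and $e \in E$.

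Finally I would read off joint continuity from this estimate: on the neighbourhood $\{(x,e): \norm{x} < \delta,\ p(e) < \delta\}$ of $(0,0)$ we have $q(B(x,e)) < \delta^2$, so $B$ is continuous at the origin and hence, being bilinear, continuous everywhere. I expect the reduction to the origin and the homogeneity bookkeeping to be routine; the single substantive step is the passage from pointwise boundedness to equicontinuity, which relies entirely on the barrelledness of $E$.
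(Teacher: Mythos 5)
Your proof is correct and takes essentially the same approach as the paper: the paper simply inlines the proof of the Banach--Steinhaus theorem, checking directly that $T=\bigcap_{x\in U_X}B_x^{-1}(W)$ is a barrel (its absorbency is exactly your pointwise-boundedness step, using continuity of $B(\cdot,e)$ on the bounded unit ball), and then invoking barrelledness to conclude $T$ is a $0$-neighbourhood with $B(U_X\times T)\subset W$. Your citation of Banach--Steinhaus to get equicontinuity of $\{B(x,\cdot):x\in B_X\}$, followed by the homogeneity bookkeeping, is just this same argument packaged through the named theorem.
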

\begin{proof}
Let $W\in U_0(G)$ and let $U_X$ be the closed unit ball of $X$. Now we take $T:=\left\{y\in E:\ B\left(x,y\right)\in W \right.$ for every $\left.x\in U_X \right\} = \bigcap_{ x \in U_X } B_x^{-1}\left(W\right),$ where $B_x = B(x,\cdot).$ Note that $T$ is an absolutely convex closed subset since each $B_x:E \rightarrow G$ is continuous. Fixing $y \in E$, since $B_y : X \rightarrow G$ is continuous then $B_y^{-1} \left(W\right) \in \mathcal{U}_0 \left(X\right),$ what means that there exists $\lambda > 0$ such that $\lambda U_X \subset B_y^{-1} \left(W\right)$. Therefore $B\left(x, \lambda y \right) \in W $ for every $x \in U_X$ and $\lambda y\in T$, that is, $T$ is absorbent. Since $E$ is barrelled then $T\in U_0\left(E\right)$ and from $B\left( U_X \times T\right)\subset W$ we conclude that $B$ is continuous.
\end{proof}

\begin{corollary}\label{coro31}
Let $ \left( \{ x_j' \} , \{ x_j \} \right) $ be an unconditional atomic decomposition for a complete  barrelled lcs $E$. Then, the bilinear map
$$
B:E \times \ell_{\infty}\to E,\  B\left(x, a\right):= \sum_{ j = 1 }^{\infty}a_j x_j'\left(x\right)x_j,
$$ is continuous.
\end{corollary}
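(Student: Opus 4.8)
The plan is to check that $B$ is separately continuous and then appeal to Lemma \ref{prop09}. Since that lemma requires the normed factor in the first slot, I would work with the bilinear map $\tilde B : \ell_\infty \times E \to E$ defined by $\tilde B(a,x) := B(x,a)$, whose joint continuity is plainly equivalent to that of $B$. Thus it suffices to produce separate continuity of $\tilde B$ in each variable and to invoke Lemma \ref{prop09} with $X = \ell_\infty$, the barrelled space $E$ in the second slot, and $G = E$.

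For continuity in the $\ell_\infty$ variable, fix $x \in E$. By hypothesis the series $\sum_{j} x_j'(x) x_j$ converges unconditionally to $x$, so Remark \ref{remark1} applies to the sequence $(x_j'(x) x_j)_j$ and states precisely that the linear map $\ell_\infty \to E$, $a \mapsto \sum_{j} a_j x_j'(x) x_j = B(x,a)$, is continuous. In particular $B$ is well defined everywhere, and $\tilde B(\cdot, x)$ is continuous for each fixed $x$. For continuity in the $E$ variable, fix $a \in \ell_\infty$ and set $F_n(x) := \sum_{j=1}^n a_j x_j'(x) x_j$, each a continuous linear operator on $E$ as a finite sum of such. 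By the previous step the series $\sum_{j} a_j x_j'(x) x_j$ converges for every $x$, so $F_n(x) \to B(x,a)$ pointwise; hence $(F_n)_n$ is pointwise convergent, in particular pointwise bounded. Because $E$ is barrelled, the Banach--Steinhaus theorem gives that $(F_n)_n$ is equicontinuous, whence its pointwise limit $\tilde B(a, \cdot) = B(\cdot, a)$ is again a continuous linear operator on $E$.

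With separate continuity of $\tilde B$ in hand, Lemma \ref{prop09} yields joint continuity of $\tilde B$, and therefore of $B$. The only delicate point is the second step: the passage from pointwise convergence of the truncations $F_n$ to continuity of the limit $B(\cdot, a)$ is exactly where barrelledness is indispensable, since a pointwise limit of continuous operators need not be continuous without a Banach--Steinhaus-type hypothesis. The first step, by contrast, is essentially a direct reading of Remark \ref{remark1}, and the final appeal to Lemma \ref{prop09} is purely formal.
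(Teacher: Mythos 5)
Your proof is correct and is exactly the argument the paper intends: separate continuity in the $\ell_\infty$ variable via Remark \ref{remark1}, separate continuity in the $E$ variable via Banach--Steinhaus on the truncations (using barrelledness), and then Lemma \ref{prop09} applied to the swapped map with $X=\ell_\infty$ to upgrade to joint continuity. The swap of variables to put the normed factor in the first slot is the right (and only) bookkeeping point, and you handle it correctly.
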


The property of having an unconditional atomic decomposition is also inherited by complemented subspaces.

\begin{proposition} \label{prop10}
Let $E$ be a lcs and let $P:E \rightarrow E$ be a continuous linear projection. If $\left( \{ x_j' \} , \{ x_j \} \right)$ is an unconditional atomic decomposition for $E$, then $\left( \{ P'(x_j') \} , \{ P\left(x_j\right) \} \right)$ is an unconditional atomic decomposition for $P\left(E\right)$.

In particular if $E$ is isomorphic to a complemented subspace of a lcs with a unconditional Schauder basis, then $E$ admits an unconditional atomic decomposition.
\end{proposition}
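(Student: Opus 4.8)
The plan is to build on Proposition \ref{prop00}, which already guarantees that $\left(\{P'(x_j')\},\{P(x_j)\}\right)$ is an atomic decomposition of $P(E)$; the only thing left to add is that the convergence is \emph{unconditional}. The governing principle I would use is that a continuous linear map preserves unconditional convergence of a series: if $\sum_j z_j$ converges unconditionally to $z$ in $E$, understood as convergence of the net of finite partial sums over the directed set of finite subsets of $\N$, then for continuous linear $T$ and any $0$-neighborhood $W$ in the target, $T^{-1}(W)$ is a $0$-neighborhood in $E$, so there is a finite $F_0$ with $\sum_{j\in F}z_j-z\in T^{-1}(W)$, equivalently $\sum_{j\in F}Tz_j-Tz\in W$, for every finite $F\supseteq F_0$. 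Hence $\sum_j Tz_j$ converges unconditionally to $Tz$.

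First I would fix $y\in P(E)$. Since $y\in E$ and the decomposition of $E$ is unconditional, $y=\sum_j x_j'(y)\,x_j$ with unconditional convergence in $E$. Applying the continuous projection $P$ and invoking the principle above, $\sum_j x_j'(y)\,P(x_j)=P\!\left(\sum_j x_j'(y)x_j\right)$ converges unconditionally to $P(y)=y$. Because $P(y)=y$, we have $x_j'(y)=x_j'(P(y))=\langle P'(x_j'),y\rangle$, which are exactly the coefficients required; and since $P(E)$ carries the subspace topology, unconditional convergence in $E$ of a series of elements of $P(E)$ to a limit lying in $P(E)$ coincides with unconditional convergence in $P(E)$. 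This yields $y=\sum_j\langle P'(x_j'),y\rangle\,P(x_j)$ unconditionally, which is the assertion.

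For the \emph{in particular} statement, I would first note that an unconditional Schauder basis $(e_j)_j$ of a lcs $G$, with coefficient functionals $(e_j')_j$, furnishes the unconditional atomic decomposition $\left(\{e_j'\},\{e_j\}\right)$ of $G$ directly from the definition of an unconditional basis. If $E$ is isomorphic to a complemented subspace $F=P(G)$ of such a $G$, then applying the first part to the projection $P$ shows that $F$ admits the unconditional atomic decomposition $\left(\{P'(e_j')\},\{P(e_j)\}\right)$; transporting it through the isomorphism $E\cong F$ (which, being a topological isomorphism, also preserves unconditional convergence) produces an unconditional atomic decomposition of $E$.

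The argument is essentially routine once Proposition \ref{prop00} is available; the only point that genuinely needs care is the preservation of unconditional convergence under $P$, together with the verification that the coefficients reproduce as $\langle P'(x_j'),y\rangle$ on $P(E)$. I do not anticipate a serious obstacle, but one should fix a definition of unconditional convergence for which the preservation step is immediate (the net of finite partial sums, or invariance under permutations). Alternatively, the $\ell_\infty$ characterization of Remark \ref{remark1} could be used, exploiting that $P(E)$ is complete as a (closed) complemented subspace of the complete space $E$.
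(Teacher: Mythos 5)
Your proof is correct and takes exactly the route the paper intends: Proposition \ref{prop10} is stated without proof precisely because it reduces to the computation in Proposition \ref{prop00} (the coefficient identity $\langle P'(x_j'),y\rangle = x_j'(y)$ for $y\in P(E)$) plus the routine observation that the continuous projection $P$ preserves unconditional convergence, which is exactly what you supply. Your handling of the subspace topology and of the ``in particular'' clause via transport through the isomorphism likewise matches the paper's implicit reasoning, so there is nothing to correct.
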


\par\medskip\noindent
Similarly to Lemma \ref{lem02} we have the following.

\begin{lemma}\label{lem11}
    Let $\left( x_j \right)_j$ be a fixed sequence of non-zero elements in a lcs $E$ and let us denote by $\widetilde{\bigwedge}$ the space
    \begin{equation}\label{eq03}
            \widetilde{\bigwedge} := \{ \alpha = \left(\alpha_j\right)_j \in \omega : \sum_{j = 1}^{\infty} \alpha_j x_j \mbox{ is unconditionally convergent in } E \}.
        \end{equation}
 Endowed with the system of seminorms
        \begin{equation}\label{eq04}
            \widetilde{{\mathcal Q}} := \left\{ \widetilde{q}_p\left(\left(\alpha_j\right)_j\right) := \sup_{b \in B_{\ell^{\infty}}} p\left(\sum_{j = 1}^{\infty} b_j \alpha_j x_j \right) , \mbox{ for all } p \in cs(E) \right\},
        \end{equation}
$\widetilde{\bigwedge}$ is a complete lcs of sequences and the canonical unit vectors are an unconditional basis.
\end{lemma}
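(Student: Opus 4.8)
The plan is to mimic the proof of Lemma \ref{lem02}, adapting it from ordinary convergence to unconditional convergence, so the structure has three parts: verify that $\widetilde{\mathcal Q}$ consists of well-defined seminorms on $\widetilde{\bigwedge}$, verify that the unit vectors form an unconditional basis, and verify completeness. The seminorm $\widetilde q_p$ is finite precisely because $\alpha \in \widetilde{\bigwedge}$ means $\sum_j \alpha_j x_j$ converges unconditionally; by Remark \ref{remark1} (the characterization from \cite[p.116]{011}), unconditional convergence in a (sequentially) complete space is equivalent to the continuity of the operator $\ell_\infty \to E$, $(b_j)_j \mapsto \sum_j b_j \alpha_j x_j$, which forces $\sup_{b \in B_{\ell_\infty}} p(\sum_j b_j \alpha_j x_j) < \infty$ for each $p$. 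So each $\widetilde q_p$ is a genuine seminorm, and since the $x_j$ are nonzero the system separates points (taking $b$ and a single coordinate recovers $|\alpha_k| p(x_k)$ for suitable $p$), giving a Hausdorff sequence space containing $\varphi$ and continuously included in $\omega$.

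Next I would show the unit vectors $(e_k)_k$ are an unconditional basis. First they are a Schauder basis: the key monotonicity estimate is that for any fixed sign/bounded multiplier pattern, truncating the series only decreases the supremum, so $\widetilde q_p(\sum_{i=1}^n \alpha_i e_i) \leq \widetilde q_p(\sum_{i=1}^{n+m}\alpha_i e_i)$; this lets me invoke \cite[14.3.6]{006} exactly as in Lemma \ref{lem02} to conclude the partial sums converge to $\alpha$. For \emph{unconditional} convergence of $\sum_k \alpha_k e_k$ in $\widetilde{\bigwedge}$, I would again appeal to Remark \ref{remark1}: it suffices to show $\sup_{c \in B_{\ell_\infty}} \widetilde q_p(\sum_k c_k \alpha_k e_k) < \infty$ and that the induced map $\ell_\infty \to \widetilde{\bigwedge}$ is continuous. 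But $\widetilde q_p(\sum_k c_k \alpha_k e_k) = \sup_{b \in B_{\ell_\infty}} p(\sum_j b_j c_j \alpha_j x_j)$, and since the product $(b_j c_j)_j$ again ranges within $B_{\ell_\infty}$, this equals $\widetilde q_p(\alpha)$; the supremum over $c$ is therefore exactly $\widetilde q_p(\alpha) < \infty$, and the same identity gives continuity of the map. This self-similarity of $\widetilde q_p$ under bounded multipliers is the crux and makes unconditionality almost automatic.

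For completeness when $E$ is complete, I would take a Cauchy net (or sequence, in the metrizable case) in $\widetilde{\bigwedge}$. Coordinatewise it converges in $\omega$ to some $\alpha$, because the inclusion into $\omega$ is continuous and coordinate functionals are dominated by the $\widetilde q_p$. The task is then to show $\alpha \in \widetilde{\bigwedge}$, i.e.\ that $\sum_j \alpha_j x_j$ converges unconditionally in $E$, and that the net converges to $\alpha$ in the topology $\widetilde{\mathcal Q}$. Here I would use the Cauchy estimates: for each $p$ and $\eps$, eventually $\widetilde q_p(\alpha^{(\mu)} - \alpha^{(\nu)}) < \eps$, which by definition controls $\sup_{b}p(\sum_j b_j(\alpha_j^{(\mu)} - \alpha_j^{(\nu)}) x_j)$ uniformly; passing to the coordinatewise limit in $\nu$ (justified on finite partial sums and then extended, using completeness of $E$ to produce the limit of the tails) yields $\widetilde q_p(\alpha^{(\mu)} - \alpha) \leq \eps$ and simultaneously that the operator $\ell_\infty \to E$ attached to $\alpha$ is a uniform limit of continuous operators, hence continuous, so $\alpha \in \widetilde{\bigwedge}$ by Remark \ref{remark1}.

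The main obstacle I anticipate is precisely this last completeness argument: unlike ordinary convergence, I cannot simply interchange the supremum over $b \in B_{\ell_\infty}$ with the coordinatewise limit without care, since the sup is over an infinite index set and the tails of the series enter. The clean way around it is to first establish the estimates on finite partial sums $\sum_{j=1}^n$, where only finitely many coordinates and a compact range of $b$ are involved so limits pass freely, and only afterwards let $n \to \infty$ using completeness of $E$ together with the uniform Cauchy control to guarantee the tails vanish uniformly in $b$. Once framed through Remark \ref{remark1} as continuity of the associated $\ell_\infty$-operator, completeness of $\widetilde{\bigwedge}$ reduces to a standard uniform-limit-of-continuous-maps argument, and the Fr\'echet/Banach special case follows since a countable fundamental system of seminorms on $E$ produces a countable one via $\widetilde{\mathcal Q}$.
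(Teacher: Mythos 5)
Your proposal is correct and coincides with the paper's intended argument: the paper offers no written proof of this lemma, saying only ``Similarly to Lemma \ref{lem02} we have the following,'' and what you write out is exactly that adaptation (Remark \ref{remark1} to make the seminorms $\widetilde{q}_p$ well defined and finite, the monotonicity estimate together with \cite[14.3.6]{006} for the basis, the bounded-multiplier inequality $\widetilde{q}_p\left(\sum_k c_k\alpha_k e_k\right)\leq \widetilde{q}_p(\alpha)$ for unconditionality, and the finite-sections-then-uniform-tails argument for completeness). The only slip worth flagging is attributional rather than substantive: \cite[14.3.6]{006} only upgrades a topological basis to a Schauder basis (continuity of the coefficient functionals), while the convergence of the sections $\sum_{j\leq n}\alpha_j e_j$ to $\alpha$ in $\widetilde{\bigwedge}$ itself comes from the uniform-in-$b$ vanishing of $\sup_{b\in B_{\ell^\infty}} p\left(\sum_{j>n} b_j\alpha_j x_j\right)$ supplied by Remark \ref{remark1} --- an estimate you in any case state and use in your completeness step.
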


\begin{theorem}\label{thrm11}
Let $E$ be a complete, barrelled lcs. The following conditions are equivalent:
\begin{itemize}
\item[\rm (1)] $E$ admits an unconditional atomic decomposition.
\item[\rm (2)] $E$ is isomorphic to a complemented subspace of a complete sequence space with the canonical unit vectors as unconditional Schauder basis.
\item[\rm (3)] $E$ is isomorphic to a complemented subspace of a complete sequence space with unconditional Schauder basis.
\end{itemize}
\end{theorem}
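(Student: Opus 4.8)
The plan is to mirror the structure of the proof of Theorem \ref{thrm01}, replacing the sequence space $\bigwedge$ of Lemma \ref{lem02} by the space $\widetilde{\bigwedge}$ of Lemma \ref{lem11}, and to lean on Proposition \ref{prop10} for the converse direction. The cycle $(1) \Rightarrow (2) \Rightarrow (3) \Rightarrow (1)$ is the natural route: the implication $(2) \Rightarrow (3)$ is trivial since a complete sequence space is in particular a complete lcs, and $(3) \Rightarrow (1)$ is exactly the final assertion of Proposition \ref{prop10}. Thus the entire content of the theorem sits in $(1) \Rightarrow (2)$.

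For $(1) \Rightarrow (2)$, I would start from an unconditional atomic decomposition $(\{x_j'\},\{x_j\})$ of $E$, assuming without loss of generality that $x_j \neq 0$ for all $j$, and form the complete sequence space $\widetilde{\bigwedge}$ from Lemma \ref{lem11}, which carries the canonical unit vectors as an unconditional Schauder basis. As in Theorem \ref{thrm01}, I would define $U: E \to \widetilde{\bigwedge}$ by $U(x) := (x_j'(x))_j$ and $S: \widetilde{\bigwedge} \to E$ by $S((\alpha_j)_j) := \sum_{j=1}^\infty \alpha_j x_j$. First I must check that $U$ actually maps into $\widetilde{\bigwedge}$: this is precisely the statement that $\sum_j x_j'(x) x_j$ converges \emph{unconditionally}, which holds by the hypothesis that the decomposition is unconditional. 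The relation $S \circ U = I_E$ is then immediate from the atomic decomposition identity, so that $U$ is injective and $U \circ S$ is a projection of $\widetilde{\bigwedge}$ onto $U(E)$, identifying $E$ with a complemented subspace.

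The step that requires genuine care, and which I expect to be the main obstacle, is the continuity of $U$ and $S$ with respect to the seminorms $\widetilde{q}_p$ of \eqref{eq04}, since these involve an extra supremum over the unit ball $B_{\ell^\infty}$ that was absent in Lemma \ref{lem02}. For $S$ the estimate is direct: $p(S((\alpha_j)_j)) = p(\sum_j \alpha_j x_j) \leq \sup_{b \in B_{\ell^\infty}} p(\sum_j b_j \alpha_j x_j) = \widetilde{q}_p((\alpha_j)_j)$, taking $b$ to be the constant sequence $1$. The continuity of $U$ is where barrelledness enters, and here I would invoke Corollary \ref{coro31}: the bilinear map $B(x,a) = \sum_j a_j x_j'(x) x_j$ is continuous on $E \times \ell_\infty$, so for each $p \in cs(E)$ there is $p' \in cs(E)$ with $\sup_{b \in B_{\ell^\infty}} p(\sum_j b_j x_j'(x) x_j) \leq p'(x)$ for all $x \in E$; the left-hand side is exactly $\widetilde{q}_p(U(x))$, giving $\widetilde{q}_p(U(x)) \leq p'(x)$ and hence the continuity of $U$. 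This is the place where the unconditional analogue genuinely diverges from the proof of Theorem \ref{thrm01}, where equicontinuity of the partial-sum operators $F_n$ sufficed; in the unconditional setting one needs the uniform control over all sign (or $\ell^\infty$-ball) choices that Corollary \ref{coro31} packages. With $U$ and $S$ both continuous and $U \circ S$ a projection onto $U(E) \cong E$, the proof closes.
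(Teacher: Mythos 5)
Your proposal is correct and takes essentially the same route as the paper: the published proof consists precisely of the remark that one follows the steps of Theorem \ref{thrm01} with the space $\widetilde{\bigwedge}$ of Lemma \ref{lem11} in place of $\bigwedge$, the continuity of $U$ now coming from Corollary \ref{coro31}. Your more detailed write-up (the continuity estimate for $S$ via the constant sequence $b=(1,1,\dots)$, the relation $S\circ U=I_E$, and invoking Proposition \ref{prop10} for $(3)\Rightarrow(1)$) merely makes explicit the steps the paper leaves as routine.
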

\begin{proof}
The proof follows the steps of Theorem \ref{thrm01} but the continuity of the map $$U:E\longrightarrow \widetilde{\bigwedge}, \,\,x \to \left(x'_j(x)\right )_j,$$ now follows from Corollary \ref{coro31}.
 \end{proof}

In our next two results, bipolars are taken in $E''$ that is $U^{\circ \circ}=\{x''\in E'': |x''(x')|\leq 1 \mbox{ for all } x' \in U^{\circ } \}.$

\begin{lemma}\label{lem30}
Let $E$ be a lcs and let $U$ be an absolutely convex and closed 0-neighborhood. For every $ z \in E''$ such that $p_{U^{\circ \circ}}\left(z\right) > 0$ there exists $\left(x_{\alpha}\right) \subset E$ with $p_U\left(x_{\alpha}\right) \leq p_{U^{\circ \circ}} \left(z\right)$ and $x_{\alpha} \to z$ in $\sigma\left(E'', E'\right)$.
\end{lemma}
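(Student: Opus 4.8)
The plan is to recognise $U^{\circ\circ}$ as the $\sigma(E'',E')$-closure of the canonical image of $U$ in $E''$, and then to read the required net off directly from the definition of closure. Write $J\colon E\to E''$ for the canonical embedding and put $r:=p_{U^{\circ\circ}}(z)>0$. First I would compute the polar of $J(U)$ in the dual pair $\langle E'',E'\rangle$: since $(Jx)(x')=x'(x)$, one has $J(U)^{\circ}=\{x'\in E':|x'(x)|\le 1\ \text{for all}\ x\in U\}=U^{\circ}$, so the bipolar of $J(U)$ taken in $E''$ is exactly the set $U^{\circ\circ}$ of the statement. As $U$ is absolutely convex, so is $J(U)$, and the bipolar theorem applied in $\langle E'',E'\rangle$ gives
\[
U^{\circ\circ}=\overline{J(U)}^{\,\sigma(E'',E')}.
\]

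Next I would place $z$ in a suitably scaled copy of this set. Since $U^{\circ}$ is equicontinuous, hence bounded in $E'_\beta$, its polar $U^{\circ\circ}$ is a $\beta(E'',E')$-neighbourhood of $0$, so $p_{U^{\circ\circ}}$ is a genuine (finite) gauge and $r<\infty$. Being a bipolar, $U^{\circ\circ}$ is $\sigma(E'',E')$-closed and absolutely convex, and consequently the infimum defining $r$ is attained: for each $n$ one has $z/(r+\tfrac1n)\in U^{\circ\circ}$, and letting $n\to\infty$ (scalar multiplication being $\sigma(E'',E')$-continuous) together with closedness yields $z/r\in U^{\circ\circ}$, that is $z\in rU^{\circ\circ}$. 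By homogeneity of the closure operation this becomes
\[
z\in rU^{\circ\circ}=r\,\overline{J(U)}^{\,\sigma(E'',E')}=\overline{J(rU)}^{\,\sigma(E'',E')}.
\]

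Finally, by the very definition of the $\sigma(E'',E')$-closure there is a net $(x_\alpha)\subset rU$ with $J(x_\alpha)\to z$ in $\sigma(E'',E')$, which, upon identifying $E$ with $J(E)$, is the desired net. Since $U$ is closed, absolutely convex and absorbing, $rU=\{x\in E:p_U(x)\le r\}$, whence $p_U(x_\alpha)\le r=p_{U^{\circ\circ}}(z)$ for every $\alpha$, as claimed. The only genuinely delicate points are bookkeeping ones: making sure the polars are formed in the correct dual pair so that $J(U)^{\circ}=U^{\circ}$ and hence $U^{\circ\circ}$ really is the weak$^*$-closure of $J(U)$, and justifying the attainment of the gauge infimum, for which the $\sigma(E'',E')$-closedness of the bipolar $U^{\circ\circ}$ is exactly what is needed. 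Beyond these, the statement is a direct application of the bipolar theorem and I expect no serious obstacle.
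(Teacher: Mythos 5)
Your proof is correct and takes essentially the same route as the paper: both identify $U^{\circ\circ}$ with the $\sigma(E'',E')$-closure of (the canonical image of) $U$ via the bipolar theorem, extract a net in $U$ converging weakly\(^*\) to $z/p_{U^{\circ\circ}}(z)$, and rescale by $p_{U^{\circ\circ}}(z)$. You merely make explicit some points the paper treats as immediate --- that $J(U)^{\circ}=U^{\circ}$ in the pair $\langle E'',E'\rangle$, that the gauge $p_{U^{\circ\circ}}$ is finite, and that the $\sigma(E'',E')$-closedness of the bipolar gives $z/p_{U^{\circ\circ}}(z)\in U^{\circ\circ}$.
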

\begin{proof}
First, we observe that $x:= \frac{z}{p_{U^{\circ \circ}}(z)}\in U^{\circ \circ}$, a set that coincides with  $\overline{U}^{\sigma\left(E'', E'\right)}$ by the bipolar Theorem (\cite[8.2.2]{006}). Therefore there exists $\left(y_{\alpha}\right)_{\alpha} \subset U$ such that $y_{\alpha} \to x$ in $\sigma\left(E'', E'\right)$. Now, it suffices to take $x_{\alpha} := p_{U^{\circ \circ}}\left(z\right)y_{\alpha}.$
\end{proof}
\begin{theorem}\label{thrm21}
Let $E$ be a complete, barrelled lcs which admits an unconditional atomic decomposition $ \left( \{ x_j' \} , \{ x_j \} \right) $. Then, $ \left( \{ x_j' \} , \{ x_j \} \right) $ is boundedly complete if and only if $E$ does not contain a copy of $c_0$.
\end{theorem}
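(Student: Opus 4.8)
The plan is to reduce bounded completeness of the decomposition to the convergence of \emph{weakly unconditionally Cauchy} (WUC) series and to run the whole argument through a locally convex form of the Bessaga--Pe\l czy\'nski dichotomy: in a sequentially complete lcs $E$, the space $E$ contains no copy of $c_0$ if and only if every WUC series in $E$ converges unconditionally, where a series $\sum_j z_j$ is called WUC when the set $\{\sum_{j\in F}\eps_j z_j : F\subset\N \text{ finite},\ |\eps_j|\le 1\}$ is bounded. The engine of the proof is the following Key Lemma: for every $x''\in E''$ the series $\sum_j x''(x_j')\,x_j$ is WUC. To prove it I would exploit unconditionality through Corollary \ref{coro31}, which makes the bilinear map $B(x,a)=\sum_j a_j x_j'(x)x_j$ continuous; hence the multipliers $M_a:=B(\cdot,a)$ form an equicontinuous family of operators on $E$ as $a$ ranges over the unit ball of $\ell_\infty$. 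Passing to bitransposes, $\{M_a'' : \norm{a}_\infty\le 1\}$ is equicontinuous on $E''_\beta$, so $\{M_a''(x'')\}$ is bounded there for fixed $x''$. A direct computation gives $M_a''(x'')(x')=\sum_j a_j x''(x_j')x'(x_j)$, and for $a$ equal to $\eps_j$ on a finite set $F$ and $0$ elsewhere this element of $E''$ is exactly the finite sum $\sum_{j\in F}\eps_j x''(x_j')x_j\in E$. Thus all signed finite partial sums of $\sum_j x''(x_j')x_j$ lie in a fixed bounded subset of $E''_\beta$, and since $E$ is barrelled they form a bounded subset of $E$; this is precisely the WUC property.

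The direction \emph{no copy of $c_0$ $\Rightarrow$ boundedly complete} is then immediate: given $x''\in E''$, the Key Lemma makes $\sum_j x''(x_j')x_j$ a WUC series, and the dichotomy (using completeness of $E$) forces it to converge in $E$, which is exactly bounded completeness.

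The reverse direction, \emph{boundedly complete $\Rightarrow$ no copy of $c_0$}, is where the main obstacle lies, and I would argue contrapositively. Assuming a subspace $Y\subset E$ isomorphic to $c_0$ with $u_n$ the images of the unit vectors, the series $\sum_n u_n$ is WUC but divergent. The natural candidate witness is the $\sigma(E'',E')$-cluster point $x''$ of the bounded net of partial sums $\sum_{n\le N}u_n$, which exists because $E$ is barrelled (Alaoglu--Bourbaki); one checks $x''(x_j')=\sum_n x_j'(u_n)$, the latter series being absolutely convergent by the WUC character of $\sum_n u_n$. Bounded completeness would then force $\sum_j x''(x_j')x_j$ to converge in $E$, and the goal is to contradict divergence of $\sum_n u_n$. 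The genuine difficulty is that a single iterated sum may converge even when the double family $\{x_j'(u_m)x_j\}_{m,j}$ is not summable, so the two orders of summation cannot simply be interchanged and a carelessly chosen cluster point may yield an \emph{accidentally} convergent coefficient series. I expect this to be the crux. I would resolve it by a gliding hump argument: passing to a block basic sequence of $(u_n)$ still equivalent to the $c_0$-basis, and combining the unconditionality of the decomposition (again Corollary \ref{coro31} and Remark \ref{remark1}) with Hahn--Banach extensions $\hat y_n\in E'$ of the $c_0$-coordinate functionals of $Y$ to separate the blocks, thereby forcing $\sum_j x''(x_j')x_j$ to inherit the divergence of $\sum_n u_n$ and producing an $x''$ that violates bounded completeness.

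I would also record why the tempting shortcut is blocked: transferring the statement to the unconditional basis of the sequence space $\widetilde{\bigwedge}$ of Theorem \ref{thrm11}, where the basis case (Proposition \ref{prop05}) is available, is lossy, since $\widetilde{\bigwedge}$ may contain copies of $c_0$ arising from the complement of $E$ rather than from $E$ itself. Consequently the \lqq only if\rqq{} direction must be carried out inside $E$, which is exactly what makes the gliding-hump construction unavoidable and constitutes the heart of the proof.
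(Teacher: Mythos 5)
Your first direction (\lqq no copy of $c_0$ $\Rightarrow$ boundedly complete\rqq) is correct, and it is in substance the paper's own hard direction, repackaged. Your Key Lemma works: joint continuity of $B$ (Corollary \ref{coro31}) makes $\{M_a : \norm{a}_\infty\le 1\}$ equicontinuous, this lifts to the biadjoints, and for finitely supported $a$ one has $M_a''(x'')=\sum_{j\in F}\eps_j x''(x_j')x_j\in E$, so the signed finite sections lie in a $\sigma(E'',E')$-bounded set and hence are bounded in $E$ (by the Mackey theorem on dual pairs --- barrelledness is not even needed at that point). The paper proves exactly the same WUC statement, but by approximating $x''$ with elements of $E$ via the bipolar Lemma \ref{lem30}: its verification that $T:(\varphi,\norm{\cdot}_\infty)\to E$ is continuous on the blocks $y_j=\sum_{i=p_j}^{q_j}x''(x_i')x_i$ is precisely the WUC property. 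After that, where you invoke a Tumarkin/Bessaga--Pe{\l}czy\'nski-type dichotomy for sequentially complete lcs, the paper extends $T$ to $\widetilde T:c_0\to E$ and applies the Qingying--Ronglu theorem \cite{023} (relative compactness of $(\widetilde T e_j)_j$, plus weak null-ness, contradicting $p_{U_1}(y_j)\ge 1$); these are the same circle of results, and your formulation is cleaner but rests on the same citation.

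The genuine gap is in the converse, \lqq boundedly complete $\Rightarrow$ no copy of $c_0$\rqq, which you identify as the heart of the proof and then leave as an unexecuted sketch. The obstacle you yourself flag is real and never resolved: the weak* cluster point $x''$ of the partial sums $\sum_{n\le N}u_n$ may well have an accidentally convergent coefficient series $\sum_j x''(x_j')x_j$, and nothing in your gliding-hump outline explains how the Hahn--Banach extensions of the coordinate functionals force divergence --- the two orders of summation simply cannot be compared without more structure. The natural repair (testing all multipliers $x''_a$, $a\in\ell_\infty$, and deriving an operator $\ell_\infty\to E$) needs continuity of $x''\mapsto\sum_j x''(x_j')x_j$ on $E''_\beta$, which requires the bidual to be barrelled; the paper explicitly warns this can fail beyond Fr\'echet spaces. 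In fact this direction is the \emph{soft} one, and you missed the short argument: $E$ is separable (it admits an atomic decomposition), so any copy of $c_0$ in $E$ is complemented by \cite[8.5.9]{006}; by Remark \ref{prop005} the projected decomposition $\left(\{P'(x_j')\},\{P(x_j)\}\right)$ is a boundedly complete atomic decomposition of $P(E)\simeq c_0$; and Theorem \ref{prop07}(1) --- whose hypothesis that the bidual be barrelled now holds automatically, since $c_0''=\ell_\infty$ is a Banach space --- would make $c_0$ complemented in $\ell_\infty$, contradicting Phillips' theorem. Without that reduction (or a completed gliding-hump), this half of your proof is not established, so the proposal as written proves only one implication.
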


\begin{proof}
Suppose that $E$ contains a copy of $c_0$. Since $E$ is separable, there exists a projection $P: E \rightarrow E$ such that $P\left(E\right)$ is isomorphic to $c_0$ (\cite[8.5.9]{006}). If $ \left( \{ x_j' \} , \{ x_j \} \right) $ is boundedly complete, then $ \left( \{ P'(x_j') \} , \{ P\left(x_j\right) \} \right) $ is a boundedly complete atomic decomposition in $P\left(E\right)\simeq c_0$. By Proposition \ref{prop07}, $c_0$ is complemented in its bidual, a contradiction.

In order to show the converse, suppose that $E$ does not contain a copy of $c_0$ and $\left( \{ x_j' \} , \{ x_j \} \right)$ is not boundedly complete. Then there exists $x'' \in E'',$ $x''\neq 0,$ such that $\sum_{ j = 1}^{\infty} x''\left(x_j'\right)x_j$ is not  convergent in $E$. We can find an absolutely convex 0-neighborhood $U_1$ and two sequences $\left(p_i\right)$, $\left(q_i\right)$ of natural numbers such that $p_1 < q_1 < p_2 < q_2 < \ldots $ and $p_{U_1} \left( \sum_{ i = p_j }^{q_j} x''\left(x_i'\right)x_i \right) \geq 1$ for each $j \in \N$. We set $y_j := \sum_{ i = p_j}^{q_j} x''\left(x_i'\right)x_i$ and define $T: \varphi \rightarrow E$ by $T\left(\left(a_j\right)_j\right) := \sum_{j = 1}^{\infty} a_j y_j$. We first prove that $T$ is continuous when $\varphi$ is endowed with the $\left\| \cdot  \right\|_{\infty}$- norm. To this end, take $U$ an absolutely convex neighborhood of the origin in $E.$ Since $x'' \neq 0$, $x'' \in E''$, there is an absolutely convex  0-neighborhood $U_2$ in $E$ such that $p_{U_2^{\circ \circ}}\left( x''
 \right) >0 $.  Put $V := U_1 \cap U_2 \cap U$.  Clearly $p_{V^{\circ \circ}}\left(x''\right) \geq p_{U_2^{\circ \circ}}\left(x''\right) > 0$.  We can apply Corollary \ref{coro31} to find an absolutely convex closed 0-neighborhood $W$ in $E$ such that $W \subset V$ and
    \begin{equation}\label{eq07}
       p_V\left( \sum_{ i = 1}^{\infty} d_i x_i'\left(z\right) x_i \right) \leq p_{W}\left(z\right)\left\|d\right\|_{\infty}
    \end{equation}
for each $n \in \N$, each $d \in \ell^{\infty}$ and $z \in E$. For $a = \left(a_j\right)_j \in \varphi$, and $s:={\rm max( supp}\, a{\rm )}$, the support of $a$ being the set of non-zero coordinates of $a$,  we define $b_i = a_j$ for $p_j \leq i \leq q_j$, and $b_i = 0$ otherwise. We have
    \begin{equation*}
     \sum_{ j = 1}^{\infty} a_j y_j = \sum_{ j = 1}^{s} a_j y_j = \sum_{ i =p_1}^{q_{s}} b_i x''\left(x_i'\right)x_i.
    \end{equation*}

Given $\varepsilon > 0$,  we can apply Lemma \ref{lem30} to find $y \in E$, $p_{W}\left(y\right) \leq p_{W^{\circ \circ}}\left(x''\right)$ and
    \begin{equation*}
      \max_{p_1 \leq i \leq q_{s}} \left| \left(x'' - y \right)\left(x_i'\right) \right| \leq \frac{ \varepsilon}{ 2q_s\left\| a \right\|_{\infty} \max\left(p_V\left(x_i\right), 1\right)}.
    \end{equation*}
This implies
    \begin{eqnarray*}
      p_V\left( \sum_{ i = p_1}^{q_{s}} b_i x''\left(x_i'\right)x_i \right) & \leq & p_V\left( \sum_{ i = p_1 }^{q_{s}} b_i x_i'\left(y\right)x_i \right) + \sum_{ i = p_1}^{q_s} \left| b_i \right| \left| \left(x''- y\right)\left(x_i'\right)\right| p_V\left(x_i\right) \leq \nonumber \\
      & \leq & p_V\left( \sum_{ i = p_1}^{q_{s}} b_i x_i'\left(y\right)x_i \right) + \frac{\varepsilon}{2}.
    \end{eqnarray*}
 Now, by the estimate (\ref{eq07}), we obtain
    \begin{equation*}
      p_V\left(\sum_{ i = p_1}^{q_{s}} b_i x_i'\left(y\right)x_i \right) \leq \left( \max_{p_1 \leq i \leq q_{s}} \left|b_i\right| \right)p_{W}\left(y\right) \leq \left( \max_{ 1 \leq j \leq s} \left| a_j \right| \right)p_{W^{\circ \circ}}\left(x''\right).
    \end{equation*}

Then,

      $$p_V\left( \sum_{ j = 1}^{s} a_j y_j \right)  \leq \left\| a\right\|_{\infty} p_{W^{\circ \circ}}\left(x''\right) + \dfrac{\varepsilon}{2}.
   $$
Since this holds for each $\varepsilon > 0$ , we get $$p_{U}\left( \sum_{ j = 1}^{\infty} a_j y_j \right)\leq p_V\left( \sum_{j = 1}^\infty a_j y_j \right) \leq  \left\| a \right\|_{\infty} p_{W^{\circ \circ}}\left(x''\right).$$

 Thus the operator $T: \left(\varphi, \left\|\cdot\right\|_{\infty} \right) \rightarrow E$ is  continuous. Since $E$ is complete, $T$  admits a unique continuous extension $\widetilde{T}: c_0 \rightarrow E$. As by assumption $E$ does not contain $c_0$, we can apply Theorem 4 in \cite[p.208]{023} to conclude that $\left( \widetilde{T}\left(e_j\right) \right)_j$ has a convergent subsequence $\left(\widetilde{T}\left(e_{j_k}\right)\right)_k.$ That is, $\left ( y_j \right )_j$ admits a convergent subsequence $\left(y_{j_k}\right)_k$. Moreover, since $\widetilde{T}: \left(c_0, \sigma\left(c_0, l_1\right)\right) \rightarrow \left(E, \sigma\left(E,E'\right)\right)$ is also continuous then $\left (\widetilde{T}\left(e_j\right)\right)_j = \left (y_j\right)_j$ is $\sigma\left(E,E'\right)$-convergent to 0, hence $\left(y_{j_k}\right)_k$ must converge to 0 in $E.$ This is a contradiction, since $p_{U_1}\left(y_j\right) \geq 1$ for each $j \in \N$.
\end{proof}

\begin{definition}{\rm \cite{010}}
An $\left( LF \right)$-space $E = \IL_{n \to } E_n $ is called \textit{boundedly retractive} if for every bounded set $B$ in E  there exists $m=m(B) $ such that $B$ is contained and bounded in  $E_m$ and $E_m$ and $E$ induce the same topology on $B.$ \end{definition}
\par\medskip\noindent
By \cite{029} an $\left( LF \right)$-space $E$ is boundedly retractive if and only if each bounded subset in $E$ is in fact bounded in some step $E_n$ and  for each $n$ there is $m>n$ such that $E_m$ and $E$ induce the same topology on the bounded sets of $E_n.$
\par\medskip\noindent
For $\left( LB \right)$-spaces,  this is equivalent to the  a priori weaker condition that for all  $n \in \N$, there exists $m > n$ such that  for all $k>m$, $E_m$ and $E_k$ induce the same topology in the unit ball $B_n$  of $E_n$ (\cite{027}). In particular  $\left( LB \right)$-spaces with compact linking maps $E_n \hookrightarrow E_{n+1}$ are boundedly retractive. More information about these and related concepts can be seen in \cite{042}.

Obviously, each Fr\'echet space $F$ can be seen as a boundedly retractive $\left( LF \right)$-space, just take $F_n=F$ for all $n \in \N.$ In particular \ref{boundedlyLF} holds for Fr\'echet spaces. Every strict (LF)-space is boundedly retractive. In particular, for a open subset $\Omega$  in $\R^d ,$ the space ${\mathcal D}(\Omega)$ is a boundedly retractive $\left( LF \right)$-space. The space ${\mathcal E}'(\Omega)$  and  the space $HV$ in Example 1 of Section \ref{Examples} are  boundedly retractive $\left( LB \right)$-spaces.

\par\medskip\noindent
Rosenthal $\ell_1$-theorem was extended to Fr\'echet spaces by D\'{\i}az in \cite{030}, showing that every bounded sequence in a Fr\'echet space has a subsequence that is either weakly Cauchy or equivalent to the unit vectors in $\ell_1.$

\begin{proposition}\label{prop30}
{\rm (}Rosenthal $\ell_1$-theorem for $\left(LF\right)$-spaces{\rm )} Let $E = \IL_{n \to } E_n $ be a boundedly retractive $\left(LF\right)$-space. Every bounded sequence in $E$ has a subsequence which is $\sigma\left(E,E'\right)$-Cauchy or equivalent to the unit vector basis of $\ell_1$. In particular, $E$ does not contain a copy of $\ell_1$ if and only if every bounded sequence in $E$ has a $\sigma\left(E,E'\right)$-Cauchy subsequence.
\end{proposition}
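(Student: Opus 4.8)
The plan is to transport the dichotomy from the Fréchet steps, where it is exactly Díaz's theorem \cite{030}, up to $E$, using that a boundedly retractive $(LF)$-space induces on its bounded sets the topology of a single step. Concretely, let $(x_k)_k$ be bounded in $E$ and set $B=\{x_k:k\in\N\}$. By the characterization of bounded retractivity recalled after the definition (\cite{029}), $B$ is bounded in some $E_{n_0}$, and there is $m>n_0$ for which $E_m$ and $E$ induce the same topology on every bounded subset of $E_{n_0}$; in particular they agree on the absolutely convex hull $\Gamma(B)$, which is bounded in $E_{n_0}$. Since $E_{n_0}\hookrightarrow E_m$ continuously, $B$ is bounded in the Fréchet space $E_m$, so Díaz's theorem yields a subsequence $(x_{k_n})_n$ that is either $\sigma(E_m,E_m')$-Cauchy or equivalent to the $\ell_1$ unit vector basis in $E_m$.

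If the subsequence is $\sigma(E_m,E_m')$-Cauchy, it is at once $\sigma(E,E')$-Cauchy, because restriction along $E_m\hookrightarrow E$ sends $E'$ into $E_m'$; this is the first alternative. The substance is the $\ell_1$ case. The upper estimate $p(\sum_n a_n x_{k_n})\le(\sup_n p(x_{k_n}))\,\|a\|_1$ holds for every $p\in cs(E)$ since $(x_{k_n})_n$ is bounded in $E$. For the lower estimate, $\ell_1$-equivalence in $E_m$ gives $q\in cs(E_m)$ and $c>0$ with $q(\sum_n a_n x_{k_n})\ge c\,\|a\|_1$ for all finitely supported $a$; and here is where bounded retractivity enters: every normalized finite combination $z=\|a\|_1^{-1}\sum_n a_n x_{k_n}$ lies in $\Gamma(B)$, on which $E_m$ and $E$ coincide, so there are $p\in cs(E)$ and $\delta>0$ with $z\in\Gamma(B),\ q(z)\ge c\Rightarrow p(z)\ge\delta$, whence $p(\sum_n a_n x_{k_n})\ge\delta\,\|a\|_1$. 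Finally, for $a\in\ell_1$ the partial sums already converge in $E_m$, hence in $E$, so $\sum_n a_n x_{k_n}$ converges in $E$; passing to the limit in the truncations extends both estimates to all of $\ell_1$, so $a\mapsto\sum_n a_n x_{k_n}$ is an isomorphism of $\ell_1$ onto its range and $(x_{k_n})_n$ is $\ell_1$-equivalent in $E$. I expect this replacement of the step seminorm $q$ by a genuine $E$-continuous seminorm to be the main obstacle, and it is precisely why the sharper form of \cite{029} — agreement of $E$ and $E_m$ on the bounded sets of $E_{n_0}$ — is needed rather than the bare definition of bounded retractivity.

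For the final assertion, if $E$ contains a copy of $\ell_1$ then, continuous functionals on a subspace extending to $E$ by Hahn–Banach, $\sigma(E,E')$ restricts to the weak topology of that copy; the image of the $\ell_1$-basis is then a bounded sequence with no $\sigma(E,E')$-Cauchy subsequence (the $\ell_1$-basis has none, e.g. by Schur's property). Conversely, if every bounded sequence has a $\sigma(E,E')$-Cauchy subsequence, the dichotomy rules out $\ell_1$-equivalent subsequences, since the closed span of one such would be a complete — hence closed — copy of $\ell_1$ in $E$; thus $E$ contains no copy of $\ell_1$.
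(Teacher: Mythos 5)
Your proof is correct, and it follows the paper's skeleton up to the decisive step: localize the bounded sequence to a step $E_{n_0}$, pick $m$ by the characterization of bounded retractivity recalled from \cite{029} so that $E_m$ and $E$ agree on the bounded sets of $E_{n_0}$, apply D\'{\i}az's Fr\'echet version of Rosenthal's theorem \cite{030} in $E_m$, handle the weak-Cauchy alternative by restriction of functionals, and then upgrade the $\ell_1$ lower estimate from a seminorm of $E_m$ to a seminorm of $E$. Where you genuinely diverge is in how that upgrade is done. The paper forms the Banach space $F=\left\{\sum_{k}\alpha_k x_{j_k}:\alpha\in\ell_1\right\}\subset E_{n_0}$, notes that its closed unit ball $U_F$ is bounded in $E_{n_0}$ so that the topologies $\tau_m$ and $\tau$ of $E_m$ and $E$ coincide on $U_F$, and then invokes Roelcke's theorem \cite[8.1.27]{010} (two comparable vector topologies agreeing on a set that is an absolutely convex $0$-neighbourhood for the finer one agree on the span) to conclude $\tau_m|_F=\tau|_F$ and extract the seminorm $r\in cs(E)$. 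You avoid both the auxiliary space $F$ and the citation: normalized finite combinations lie in $\Gamma(B)$, on which the topologies agree, and homogeneity reduces everything to showing that $A=\{z\in\Gamma(B): q(z)\ge c\}$ is $\tau$-bounded away from $0$. Your route is more elementary and self-contained; the paper's buys a slightly stronger packaged conclusion (the full topology of $E$ restricts to the Banach topology on all of $F$) at the cost of a nontrivial external lemma. The one step you leave implicit deserves a line, since it is the crux: if no $p\in cs(E)$ and $\delta>0$ worked, then $0$ would lie in the $\tau$-closure of $A$; since $0\in\Gamma(B)$, $A\subset\Gamma(B)$, and the relative topologies $\tau|_{\Gamma(B)}$ and $\tau_m|_{\Gamma(B)}$ coincide, $0$ would also lie in the $\tau_m$-closure of $A$, contradicting that $\{q<c\}$ is a $\tau_m$-neighbourhood of $0$ disjoint from $A$. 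Two cosmetic remarks: your case-split formulation is logically equivalent to the paper's contrapositive one (assume no $\sigma(E,E')$-Cauchy subsequence and produce $\ell_1$); and in your final paragraph both halves actually prove the implication ``every bounded sequence has a $\sigma(E,E')$-Cauchy subsequence $\Rightarrow$ no copy of $\ell_1$'', whereas the reverse implication is the one that uses the dichotomy together with your (correct) observation that the closed span of an $\ell_1$-equivalent subsequence is a complete, hence closed, copy of $\ell_1$ --- the ingredients are all present, but the word ``conversely'' is attached to the wrong half.
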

\begin{proof}
Let $\left(x_j\right)_j$ be a bounded sequence in $E$ and assume that has no $\sigma\left(E, E'\right)$-Cauchy subsequence. There is $n_0 \in \N$ such that $\left(x_j\right)_j$ is a bounded sequence in $E_{n_0}.$ Now select $m \geq n_0$ such that $E_m$ and $E$ induce the same topology on the bounded sets of $E_{n_0}.$  Since $\left(x_j\right)_j$ is bounded in $E_m$ and it has no $\sigma\left(E_m, E_m'\right)$-Cauchy subsequence, we can apply Rosenthal's $\ell_1$-Theorem in the Fr\'echet space $E_m$ to conclude that there is a subsequence  $\left(x_{j_k}\right)_k$ which is equivalent to the unit vector basis of $\ell_1$. That is, there exist $c_1$ and a continuous seminorm $p$ in $E_m$ such that
\begin{equation*}
 c_1 \sum_{ k = 1}^{\infty} \left| \alpha_k \right| \leq p\left( \sum_{k=1}^{\infty} \alpha_k x_{j_k}\right) \leq \sup_{k}p( x_{j_k} )\sum_{ k = 1}^{\infty} \left| \alpha_k \right|,
\end{equation*} for every $\alpha = \left(\alpha_k\right)_k \in \ell_1.$

As the inclusion $E_{n_0}\hookrightarrow E_m$ is continuous, we find a continuous seminorm $q$ in $E_{n_0}$ such that for  $x \in E_{n_0}$ one has $p(x)\leq q(x).$  Then, for each  $\alpha = \left(\alpha_k\right)_k \in \ell_1$,
\begin{equation*}
 c_1 \sum_{ k = 1}^{\infty} \left| \alpha_k \right| \leq p\left(\sum_{k=1}^{\infty} \alpha_k x_{j_k}\right) \leq q\left( \sum_{k=1}^{\infty} \alpha_k x_{j_k}\right) \leq \sup_{k}q( x_{j_k} ) \sum_{ k = 1}^{\infty} \left| \alpha_k \right|.
\end{equation*}
Set $F:=\left\{ \sum_{k=1}^{\infty} \alpha_k x_{j_k} : \alpha = \left(\alpha_k\right)_k \in \ell_1 \right\} \subset E_{n_0}$. Then $p$ and $q$ restricted to $F$ are equivalent norms, and $F$ endowed with any of them  is a Banach space  isomorphic to $\ell_1$. The spaces $E_{n_0}$ and $E_m$ induce on $F$ the same (Banach) topology.
 Denote by $U_F$ the closed unit ball of $F$ and by $\tau_m$ and $\tau$ the topologies of $E_m$ and $E$, respectively. Then $\tau$ and $\tau_m$ coincide on $U_F,$  which is an absolutely convex 0-neighbourhood for $\tau_m |_F$. Applying a result of Roelcke \cite[8.1.27]{010} we conclude that $\tau_m$ and $\tau$ coincide in $F$; hence, there is a continuous seminorm $r$ on $E$ such that $p( z ) \leq r\left(z\right)$ for every $z \in F$. This implies, for each $\alpha = \left( \alpha_k\right)_k \in \ell_1$,
\begin{equation*}
 c_1 \sum_{ k = 1}^{\infty} \left| \alpha_k \right| \leq p\left( \sum_{k=1}^{\infty} \alpha_k x_{j_k}\right)\leq r \left( \sum_{k=1}^{\infty} \alpha_k x_{j_k}\right) \leq \left(\sup_{k} r\left(x_{j_k}\right) \right) \sum_{ k = 1}^{\infty} \left| \alpha_k \right|.
\end{equation*}
Thus, $\left(x_{j_k}\right)_k$ is equivalent to the unit vectors of $\ell_1$ in $E$ and the inclusion $F \hookrightarrow E$ is a topological isomorphism into. Then, $E$ contains an isomorphic copy of $\ell_1$.
\end{proof}

We use the notation $\mu\left(E', E\right)$ for the topology on $E'$ of uniform convergence on the absolutely convex and $\sigma(E,E')$-compact sets. In the proof of the next result we utilize the fact that a boundedly retractive $\left( LF \right)$-space $E$ does not contain $\ell_1$ if and only if every $\mu\left(E', E\right)$-null sequence in $E'$ is $\beta\left(E', E\right)$-convergent to 0. This was proved by Doma\'nski and Drewnowski and by Valdivia independently for Fr\'echet spaces. The proof can be seen in \cite{022} and the proof for arbitrary boundedly retractive $\left( LF \right)$-spaces follows the same steps as in \cite[Theorem 10]{022} but using Proposition \ref{prop30} instead of Rosenthal $\ell_1$-theorem for Fr\'echet spaces.

\begin{theorem}\label{boundedlyLF}
Let $E$ be a boundedly retractive $\left( LF \right)$-space. Assume that $E$ admits an unconditional atomic decomposition $ \left( \{ x_j' \} , \{ x_j \} \right) $. Then, $ \left( \{ x_j' \} , \{ x_j \} \right) $ is shrinking if and only if $E$ does not contain a copy of $\ell_1$.
\end{theorem}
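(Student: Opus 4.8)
The plan is to prove the two implications separately, leaning on Rosenthal's theorem for $(LF)$-spaces (Proposition \ref{prop30}) and on the characterization of shrinking decompositions in Proposition \ref{prop03}.

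\textbf{Shrinking $\Rightarrow$ no copy of $\ell_1$.} By Proposition \ref{prop30} it suffices to show that every bounded sequence $(z_k)_k$ in $E$ has a $\sigma(E,E')$-Cauchy subsequence. Since each scalar sequence $(x_j'(z_k))_k$ is bounded, a diagonal argument lets me pass to a subsequence (still written $(z_k)_k$) along which $x_j'(z_k)$ converges for every fixed $j$. I then fix $x'\in E'$ and write, for each $n$,
$$ x'(z_k)=\sum_{j=1}^n x'(x_j)\,x_j'(z_k)+x'(T_n z_k). $$
The finite sum converges as $k\to\infty$, being a finite combination of convergent scalar sequences, while the shrinking hypothesis applied to the bounded set $\{z_k:k\in\N\}$ gives $\delta_n:=\sup_k |x'(T_n z_k)|\to 0$ as $n\to\infty$. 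A standard $\limsup$--$\liminf$ estimate then shows $(x'(z_k))_k$ is Cauchy for every $x'$, so the subsequence is $\sigma(E,E')$-Cauchy. This implication uses neither unconditionality nor more than what Proposition \ref{prop30} already requires.

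\textbf{No copy of $\ell_1$ $\Rightarrow$ shrinking.} Here unconditionality is essential. Fix $x'\in E'$ and put $c_j(x):=x'(x_j)\,x_j'(x)$. Unconditionality makes the scalar series $\sum_j c_j(x)$ absolutely convergent for each $x$, so $\Psi(x):=(c_j(x))_j\in\ell_1$; moreover the functional $a\mapsto x'\big(\sum_j a_j x_j'(x)x_j\big)=\sum_j a_j c_j(x)$ on $\ell_\infty$ has norm $\sum_j|c_j(x)|$, which by the continuity of the bilinear map of Corollary \ref{coro31} is at most $C\,q(x)$ for some $q\in cs(E)$. Thus $\Psi:E\to\ell_1$ is linear and continuous, hence $\sigma(E,E')$-to-$\sigma(\ell_1,\ell_\infty)$ continuous. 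For an absolutely convex $\sigma(E,E')$-compact set $K$ the image $\Psi(K)$ is weakly compact in $\ell_1$, so by the Schur property it is norm compact, and norm compact subsets of $\ell_1$ have uniformly small tails, i.e.\ $\sup_{x\in K}\sum_{j>n}|c_j(x)|\to 0$. Since $|x'(T_n x)|=|\sum_{j>n}c_j(x)|\le \sum_{j>n}|c_j(x)|$, the sequence $x'\circ T_n=\sum_{j>n}x'(x_j)x_j'$ converges to $0$ in $\mu(E',E)$. Because $E$ contains no copy of $\ell_1$, the result of Doma\'nski--Drewnowski and Valdivia recalled just before the statement guarantees that every $\mu(E',E)$-null sequence in $E'$ is $\beta(E',E)$-null; hence $x'\circ T_n\to 0$ in $E'_\beta$ for every $x'$, which by Proposition \ref{prop03} means the decomposition is shrinking.

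I expect the second implication to be the main obstacle: the point is to upgrade the weak$^*$ convergence $x'\circ T_n\to 0$, which holds automatically, first to $\mu(E',E)$-convergence and then to $\beta(E',E)$-convergence. The device that drives this is the $\ell_1$-valued operator $\Psi$, whose construction rests on unconditionality together with Corollary \ref{coro31}; the Schur property of $\ell_1$ is what converts weak compactness of $K$ into the uniform tail estimate, and the quoted non-containment criterion is what finally promotes $\mu(E',E)$- to $\beta(E',E)$-convergence. The first implication, by comparison, is a routine interchange-of-limits argument once Rosenthal's theorem is in hand.
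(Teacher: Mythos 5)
Your proof is correct, and it takes a genuinely different route from the paper's in \emph{both} implications. For shrinking $\Rightarrow$ no copy of $\ell_1$, the paper argues in one line: by Proposition \ref{prop03} the pair $\left(\{x_j\},\{x_j'\}\right)$ is an atomic decomposition of $E'_\beta$, so $E'_\beta$ is separable, which forbids a subspace $F\simeq\ell_1$ (its strong dual, isomorphic to the non-separable $\ell_\infty$, would be a continuous linear image of the separable $E'_\beta$ via restriction). Your diagonal-extraction argument, producing a $\sigma(E,E')$-Cauchy subsequence of any bounded sequence via the splitting $x'(z_k)=\sum_{j\le n}x'(x_j)x_j'(z_k)+x'(T_nz_k)$ and the uniform tail bound from shrinking, is longer but sound, and it only needs the easy half of Proposition \ref{prop30} (the unit vector basis of $\ell_1$ has no weakly Cauchy subsequence). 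For the converse, the paper observes that unconditionality makes $\sum_j x'(x_j)x_j'$ $\sigma(E',E)$-subseries summable and then quotes the Orlicz--Pettis theorem to upgrade to $\mu(E',E)$-convergence; you instead build, for each fixed $x'$, the coefficient operator $\Psi:E\to\ell_1$, $\Psi(x)=\left(x'(x_j)x_j'(x)\right)_j$, prove its norm continuity from Corollary \ref{coro31}, and extract the uniform tail estimate $\sup_{x\in K}\sum_{j>n}\left|x'(x_j)x_j'(x)\right|\to 0$ on absolutely convex $\sigma(E,E')$-compact sets $K$ from the Schur property together with the compactness criterion in $\ell_1$. This is in effect an inlined, self-contained proof of precisely the Orlicz--Pettis step the paper quotes (the classical proof of Orlicz--Pettis for the Mackey topology factors through $\ell_1$ and Schur in just this way), and since $x'\circ T_n=x'-\sum_{j\le n}x'(x_j)x_j'$ as functionals, your $\mu$-null conclusion feeds into the same final ingredient both proofs share: the Doma\'nski--Drewnowski--Valdivia result (cf.\ \cite{022}) promoting $\mu(E',E)$-null sequences to $\beta(E',E)$-null ones when $E$ contains no copy of $\ell_1$. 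What your route buys is an explicit mechanism with quantitative tail control; what the paper's buys is brevity. One small point of hygiene: Corollary \ref{coro31} is stated for complete barrelled spaces, so your appeal to it implicitly uses the standing completeness assumption of Section \ref{uncond} (boundedly retractive $(LF)$-spaces are in any case sequentially complete and barrelled, which is all the proof of that corollary needs).
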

\begin{proof}
We first assume that $\left( \{ x_j' \} , \{ x_j \} \right)$ is shrinking. Then, by Proposition \ref{prop03}, $\left( \{ x_j' \} , \{ x_j \} \right)$ is an atomic decomposition for $E'_\beta$ and, in particular, $E'_\beta$ is separable. Consequently $E$ contains no subspace isomorphic to $\ell_1$.

Conversely, assume that $E$ does not contain a copy of $\ell_1$. By Lemma \ref{lem05}, $\left( \{ x_j \} , \{ x_j' \} \right)$ is an atomic decomposition of $\left(E',\sigma\left(E',E\right)\right)$. We check that, for all $x' \in E'$, \begin{equation}\label{eq:seriedual}\begin{displaystyle}\sum_{j=1}^{\infty} x'(x_j)x'_j\end{displaystyle}\end{equation} is subseries summable to $x'$ in $E'_\beta$. Since for each $x \in E$  the convergence of \begin{equation}\label{eq:incondicional}
\begin{displaystyle}\sum_{j=1}^{\infty}x'_j(x)x_j\end{displaystyle}\end{equation} is unconditional and $E$ is sequentially complete, then (\ref{eq:incondicional}) is subseries summable and we conclude that  (\ref{eq:seriedual}) is also $\sigma\left(E', E\right)$-subseries summable. We can apply Orlicz-Pettis' Theorem (\cite[p. 308]{006}) to obtain that (\ref{eq:seriedual}) is $\mu\left(E', E\right)$-unconditionally convergent to $x'$. Therefore it is $\beta\left(E', E\right)$-convergent to $x'$, as $E$ does not contain a copy of $\ell_1.$ Consequently $ \left( \{ x_j' \} , \{ x_j \} \right) $ is shrinking.
\end{proof}

\section{Examples}\label{Examples}

In this section we will present some examples of atomic decomposition on locally convex spaces. These  atomic decompositions are  shrinking and boundedly complete since all the spaces involved are Montel spaces.
\par\medskip\noindent
\textbf{Example 1.} This example was obtained by Taskinen in \cite{007}. Denote by $\D$ the open unit  disc $\D := \left\{ z \in \C : \left| z \right| < 1 \right\}$ and for each $n$ let $v_n$ be the weight $v_n\left(z\right):= \min\left\{ 1, \left| \log\left( 1- \left|z \right|\right)\right|^{-n}\right\}.$ We consider the weighted Banach space of holomorphic functions
 $$ H_{v_n}^{\infty} := \left\{ f: \D \rightarrow \C \mbox{ analytic } : \left\| f \right\|_{v_n} = \sup_{ z \in \D}\left|f\left(z\right)\right|v_n\left(z\right) < \infty\right\}.$$

 Since $v_{n+1}\leq v_n$ then $H_{v_n}^{\infty} \subset H_{v_{n+1}}^{\infty}$ continuously and we consider the inductive limit
$$HV = \textrm{ind}_{ n \rightarrow \infty} H_{v_n}^{\infty}.$$

The unit disc $\D$ is decomposed as $\D := \bigcup_j D_j $ with $\overset{\circ}{D}_j \neq \emptyset$ for all $j \in \N$  in such a way that the set of elements of $\D$ belonging to more that one of the  $D_j$'s has Lebesgue measure 0. Let us fix, for all $j \in \N $, $\lambda_j \in \overset{\circ}{D}_j.$ As proved in \cite{007}, we can obtain such a decomposition with the property that
$$
		  S:HV  \to  HV,\  f \mapsto \left(Sf\right)\left(z\right):= \sum_{ j = 1}^{\infty} \dfrac{ m\left(D_j\right)f\left(\lambda_j\right)}{\left(1- \overline{\lambda_j}z\right)^2},
	$$
is an isomorphism.

\begin{theorem}{\rm \cite[Theorem 1]{007}} Under the conditions above, let $u_j\left(f\right):= \left(S^{-1}f\right)\left(\lambda_j\right)$ and $f_j(z):= \frac{m(D_j)}{(1-\overline{\lambda_j}z)^2}$ be given. Then $\left( \left\{u_j\right\} , \left\{f_j\right\}\right)$ is a shrinking and boundedly complete atomic decomposition for $HV$.
\end{theorem}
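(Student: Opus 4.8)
The plan is to reduce the statement to two ingredients: the isomorphism property of $S$, which simultaneously yields the reproduction formula and the continuity of the coefficient functionals, and the fact that $HV$ is a Montel space, from which both the shrinking and the boundedly complete properties follow from the general machinery of Sections 2--4.

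First I would check that $\left(\{u_j\},\{f_j\}\right)$ is genuinely an atomic decomposition. Writing $g:=S^{-1}f$ for a given $f\in HV$, the definition of $S$ gives
\[
f=S\left(S^{-1}f\right)=\sum_{j=1}^{\infty}\frac{m\left(D_j\right)g\left(\lambda_j\right)}{\left(1-\overline{\lambda_j}z\right)^2}=\sum_{j=1}^{\infty}\left(S^{-1}f\right)\left(\lambda_j\right)f_j=\sum_{j=1}^{\infty}u_j\left(f\right)f_j,
\]
the series converging in $HV$ precisely because the series defining $Sg$ does. Each $u_j=\delta_{\lambda_j}\circ S^{-1}$ is continuous, since point evaluation at an interior point $\lambda_j\in\D$ is continuous on every step $H^{\infty}_{v_n}$, hence on $HV$, and $S^{-1}$ is continuous. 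Thus $\left(\{u_j\},\{f_j\}\right)$ is an atomic decomposition of $HV$.

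The structural input is that $HV$ is Montel, and this is where the particular weights enter: since $v_{n+1}/v_n\to 0$ as $\left|z\right|\to 1$, the linking maps $H^{\infty}_{v_n}\hookrightarrow H^{\infty}_{v_{n+1}}$ satisfy the Schwartz condition, so $HV$ is a (DFS)-space and in particular Montel (this is Taskinen's analysis in \cite{007}). Granting this, $HV$ is reflexive, hence barrelled and semi-reflexive, and condition (\ref{eq06}) of Lemma \ref{lem07} holds automatically, because in a Montel space every weakly convergent sequence converges strongly (part (c) of the Remark following Lemma \ref{lem07}). Therefore Theorem \ref{prop07}(2) applies and shows that $\left(\{u_j\},\{f_j\}\right)$ is shrinking. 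For bounded completeness I would note that this property is automatic in any semi-reflexive space: identifying $\left(HV\right)''$ with $HV$, every $x''\in\left(HV\right)''$ is evaluation at some $g\in HV$, so that $x''\left(u_j\right)=u_j\left(g\right)$ and $\sum_{j=1}^{\infty}x''\left(u_j\right)f_j=\sum_{j=1}^{\infty}u_j\left(g\right)f_j=g$ converges in $HV$.

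The only genuinely non-formal step is the Montel property of $HV$; the rest is bookkeeping with the isomorphism $S$ and the duality results already established. The main obstacle is thus to justify carefully that $HV$ is Montel from the boundary behaviour of the weights $v_n$, which I would either extract from \cite{007} or verify directly via the compactness/Schwartz condition for weighted (LB)-spaces of holomorphic functions.
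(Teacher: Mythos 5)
Your proposal is correct and takes essentially the same route as the paper: the reproduction formula falls out of the isomorphism $S$, and the shrinking and boundedly complete properties follow from the Montel property of $HV$ via Theorem \ref{prop07} (with condition (\ref{eq06}) automatic by Remark (c) after Lemma \ref{lem07}). You simply make explicit some details the paper leaves implicit, namely the continuity of the $u_j$, the compact linking maps yielding the (DFS)/Montel property, and the semi-reflexivity argument showing bounded completeness is automatic.
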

\begin{proof}
Each $ f \in HV $ can be written as
\begin{equation*}
f = S\left(S^{-1}\left(f\right)\right) = \sum_{ j = 1}^{\infty} \left( S^{-1}f\right)\left(\lambda_j\right)f_j,
\end{equation*}
hence $\left( \left\{u_j\right\} , \left\{f_j\right\}\right)$ is an atomic decomposition in $HV$. Since $HV$ is a Montel space we can apply Theorem \ref{prop07} to conclude that the atomic decomposition is shrinking.
\end{proof}

\par\medskip\noindent
As pointed in \cite[p.~330]{007}, the coefficients in the series expansion above are not unique.

\vspace{.3cm}

\noindent
\textbf{Example 2.} Let $K$ be a compact subset of $\R^p$ that coincides with the closure of its interior, i.e.\ $K = \overline{\overset{\circ}{K}}$. Let $C^{\infty}\left(K\right)$ be the space of all complex-valued functions $f \in C^{\infty}(\overset{\circ}{K})$ uniformly continuous in $\overset{\circ}{K}$  together with all partial derivatives. The Fr\'echet space topology in $C^{\infty}(K)$ is defined by the norms:
\begin{equation*}
q_n \left(f\right) := \sup\left\{ \left| f^{\left(\alpha\right)}\left(x\right)\right| : x \in K, \, \left|\alpha\right| \leq n \right\}, n \in \N_0.
\end{equation*}

A continuous and linear extension operator is a continuous and linear operator $T: C^{\infty}(K) \rightarrow C^{\infty}\left(\R^p\right)$ such that $\left.T(f)\right|_K = f.$ Not every compact set admits a continuous and linear extension operator but every convex compact set does. Further information can be found in \cite{009}.

\begin{theorem}\label{th01} Let $K \subset {\mathbb R}^p$ be a compact set which is the closure of its interior. The following conditions are equivalent:
\begin{itemize}
 \item[\rm (1)] There exists a continuous and linear extension operator $T: C^{\infty}(K) \rightarrow C^{\infty}\left(\R^p\right).$
\item[\rm (2)] There are sequences $\left( \lambda_j\right)_j \subset \R^p$ and $\left(u_j\right)_j \in C^{\infty}\left(K\right)'$  such that $\left( \left\{u_j\right\}, \left\{e^{2 \pi i x \cdot \lambda_j }\right\} \right)$ is an unconditional atomic decomposition for $C^{\infty}(K)$.
\end{itemize}
\end{theorem}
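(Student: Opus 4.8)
The plan is to route both implications through the space of smooth functions on a torus, whose Fourier system is an unconditional basis consisting precisely of exponentials, and to exploit the translation covariance $e^{2\pi i(x+y)\cdot\lambda}=e^{2\pi iy\cdot\lambda}\,e^{2\pi ix\cdot\lambda}$ as the bridge between $K$ and all of $\R^p$. For $(1)\Rightarrow(2)$, after a translation I may assume $K\subset(0,a)^p$ for some $a>0$ and put $\mathbb T=\R^p/(a\Z)^p$. The space $C^\infty(\mathbb T)$ is a nuclear Fr\'echet space whose Fourier system $e_k(x)=e^{2\pi ik\cdot x/a}$, $k\in\Z^p$, is an unconditional Schauder basis, since smooth periodic functions have rapidly decreasing Fourier coefficients and the Fourier series converges unconditionally for the $C^\infty$-topology. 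Fixing $\varphi\in C_c^\infty((0,a)^p)$ with $\varphi\equiv1$ on $K$, I would define $E:C^\infty(K)\to C^\infty(\mathbb T)$ by letting $E(f)$ be the $a$-periodic extension of $\varphi\cdot Tf$ (with $T$ the extension operator of (1)), and $R:C^\infty(\mathbb T)\to C^\infty(K)$, $R(g)=g|_K$. Both are continuous, and since $\varphi\equiv1$ and $Tf|_K=f$ on $K$ one gets $R\circ E=\mathrm{id}_{C^\infty(K)}$; hence $E$ is an isomorphism onto its range, $P:=E\circ R$ is a continuous projection, and $C^\infty(K)\cong P(C^\infty(\mathbb T))$ is complemented in $C^\infty(\mathbb T)$.

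Now Proposition \ref{prop10} furnishes an unconditional atomic decomposition of $P(C^\infty(\mathbb T))$, and I would transport it to $C^\infty(K)$ through the isomorphism $R$. The atoms become $R(Pe_k)=(RE)(Re_k)=Re_k=e_k|_K=e^{2\pi i(k/a)\cdot x}|_K$, while the functionals become $u_k:=e_k'\circ E\in C^\infty(K)'$, because $PEf=Ef$. One checks directly that $f=RE f=\sum_k\langle e_k',Ef\rangle\,e_k|_K=\sum_k u_k(f)\,e^{2\pi i(k/a)\cdot x}|_K$ with unconditional convergence, which is exactly (2) with $\{\lambda_j\}$ an enumeration of $\{k/a:k\in\Z^p\}$.

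For $(2)\Rightarrow(1)$ the natural candidate is $Tf(x):=\sum_j u_j(f)\,e^{2\pi ix\cdot\lambda_j}$, now read as a series of functions on all of $\R^p$; on $K$ it reproduces $f$, so only convergence in $C^\infty(\R^p)$ and continuity of $T$ remain. Here I invoke Corollary \ref{coro31}: the bilinear map $B:C^\infty(K)\times\ell_\infty\to C^\infty(K)$, $B(f,a)=\sum_j a_j u_j(f)\,e^{2\pi i\lambda_j\cdot}|_K$, is continuous, so for each $n$ there are $m$ and $C_n$ with $q_n(B(f,a))\le C_n\,q_m(f)\,\|a\|_\infty$. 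The decisive point is that shifting the evaluation point by $y$ only multiplies the $j$-th coefficient by the unimodular scalar $e^{2\pi iy\cdot\lambda_j}$, i.e.\ by an element of the unit ball of $\ell_\infty$: for any finite $F$ and any $z=y+x$ with $x\in K$,
\[
\Bigl|\partial^\alpha\!\!\sum_{j\in F}u_j(f)e^{2\pi iz\cdot\lambda_j}\Bigr|=\Bigl|\partial^\alpha B\!\bigl(f,(e^{2\pi iy\cdot\lambda_j}\mathbf{1}_F)_j\bigr)(x)\Bigr|\le q_{|\alpha|}\!\bigl(B(f,(e^{2\pi iy\cdot\lambda_j}\mathbf{1}_F)_j)\bigr)\le C_{|\alpha|}\,q_m(f).
\]

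Since $K=\overline{\overset{\circ}{K}}$ has nonempty interior, any compact $L\subset\R^p$ is covered by finitely many translates $y_1+\overset{\circ}{K},\dots,y_N+\overset{\circ}{K}$; writing $z\in L$ as $z=y_i+x$ with $x\in K$ and applying the displayed bound to a \emph{difference} of partial sums in place of a single sum, I would use that each series $B(f,(e^{2\pi iy_i\cdot\lambda_j})_j)$ converges in $C^\infty(K)$ (so, over the finitely many $y_i$, its partial sums are uniformly Cauchy) to conclude that the partial sums of $Tf$ are Cauchy in the seminorm $g\mapsto\sup\{|\partial^\alpha g(x)|:x\in L,\ |\alpha|\le n\}$ for every $n,L$. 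As $C^\infty(\R^p)$ is complete, $Tf$ converges there and the same estimate gives continuity of $T$, while $Tf|_K=f$ by construction. The main obstacle is precisely this continuity of $T$ into $C^\infty(\R^p)$: a priori one controls the expansion only on $K$, whereas $\partial^\alpha e^{2\pi ix\cdot\lambda_j}$ grows like $|\lambda_j|^{|\alpha|}$ and must be dominated uniformly over an unbounded region. The translation-covariance trick is what resolves it, since translating the evaluation point costs only a unimodular factor per coefficient, hence no loss in the $\ell_\infty$-estimate supplied by the unconditional structure, and the finite covering of $L$ by translates of $K$ then transfers control from $K$ to all of $\R^p$ with uniform constants.
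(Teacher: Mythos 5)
Your proof is correct in both directions, but only the second direction genuinely departs from the paper. For $(1)\Rightarrow(2)$ you reproduce the paper's construction in structural clothing: the paper also multiplies $Tf$ by a cutoff $\phi$, periodizes, and Fourier-expands, so your functionals $u_k=e_k'\circ E$ are exactly its Fourier coefficients $a_k(Hf)$ of $Hf=\phi\, Tf$; the paper verifies absolute convergence in $C^\infty(K)$ directly from the rapid decay of the coefficients, whereas you package the same facts as \lqq $C^\infty(K)$ is complemented in $C^\infty$ of the torus\rqq\ and invoke Proposition \ref{prop10}, which is tidier but equivalent. For $(2)\Rightarrow(1)$ the paper's route is different and shorter: it differentiates the expansion termwise, evaluates at a \emph{single} interior point $x_0\in \overset{\circ}{K}$, and uses unimodular multipliers $(b_j)$ to derotate the terms, extracting the scalar statement $\sum_j |u_j(f)|\,|\lambda_j^\alpha|<\infty$ for every $\alpha$; since the $\lambda_j$ are real, $\left|\partial^\alpha\bigl(u_j(f)e^{2\pi i z\cdot\lambda_j}\bigr)\right|=|u_j(f)|\,|2\pi\lambda_j^\alpha|$ is independent of $z$, so this summability yields uniform convergence of all differentiated series on the whole of $\R^p$ at once, and continuity of $T$ follows from Banach--Steinhaus. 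In particular the \lqq unbounded region\rqq\ obstacle you single out is only apparent --- the growth $|\lambda_j|^{|\alpha|}$ is in the frequency, not in the evaluation point --- so your finite covering of a compact $L$ by translates $y_i+\overset{\circ}{K}$ is correct but unnecessary. Your argument instead stays vector-valued: Corollary \ref{coro31} and Remark \ref{remark1} give convergence of $B(f,b)$ for every $b\in\ell_\infty$, translation covariance places $(e^{2\pi i y\cdot\lambda_j})_j$ in $B_{\ell_\infty}$, the Cauchy smallness comes from the finitely many convergent series $B(f,(e^{2\pi i y_i\cdot\lambda_j})_j)$, and your displayed bound, being uniform over partial sums, passes to the limit. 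What this buys you is an explicit seminorm estimate $\sup\{|\partial^\alpha Tf(z)| : z\in L,\ |\alpha|\le n\}\le C_n\, q_m(f)$ without appealing to Banach--Steinhaus; what the paper's version buys is economy, reducing everything to one evaluation point. Both proofs ultimately rest on the same two facts: $K$ has nonempty interior, and exponentials with real frequencies are unimodular, so multiplier sequences coming from translations or derotations cost nothing in the $\ell_\infty$-estimates supplied by unconditionality.
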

\begin{proof}
$(1)\Rightarrow (2).$ We consider $M > 0$ such that $ K \subset \left[ -M, M \right]^p$ and choose $\phi \in D\left( \left[ -2M, 2M \right]^p\right)$ such that $\phi(x)=1 $ for all $x$ in a neighborhood of $\left[ -M, M \right]^p$. For every $f \in C^{\infty}(K)$ we define $Hf = \phi\left(T\left(f\right)\right) \in D\left( \left] -2M, 2M \right[^p\right)$. Then $H : C^{\infty}\left(K\right) \to D\left( \left] -2M, 2M \right[^p\right)$ is a continuous and linear map and $\left.Hf\right|_K = f$. After extending $Hf$ as a periodic $C^{\infty}$ function in $\R^p$ we get
\begin{equation*}
Hf\left(x\right) := \sum_{ j \in \Z^p }a_j e^{2 \pi i x \cdot \lambda_j} \mbox{, where } \lambda_j = \dfrac{1}{4M}\left(j_1 , \ldots, j_p\right)
\end{equation*}
 and $a_k = a_k\left(Hf\right)$ are the Fourier coefficients of $Hf$. By \cite{008}, $\sup_{ j \in \Z^p} \left| a_j\right|\left|j\right|^m < \infty$ for every $m$, which implies that the series $f = \sum_{ j \in \Z^p }a_j e^{2 \pi i x \cdot \lambda_j}$ converges absolutely in $ C^{\infty}\left(K\right).$ Each $a_k,$ being a Fourier coefficient of $Hf,$ depends linearly and continuously on $f.$ Then $\left( u_j\left(\cdot\right),e^{2 \pi i x \cdot \lambda_j}\right)_{ j \in \Z^p}$ is an atomic decomposition for $ C^{\infty}\left(K\right)$, with $u_j \in C^{\infty}\left(K\right)'$  defined by $u_j\left(f\right) = a_j\left(Hf\right)$.
\par
$(2)\Rightarrow (1).$ For every $f\in C^{\infty}(K)$ we have
$$
f(x) = \sum_{j=1}^{\infty}u_j(f) e^{2 \pi i x \cdot \lambda_j}\ \mbox{in}\ C^{\infty}(K)$$ and
$$
\sum_{j=1}^{\infty}u_j(f)b_j e^{2 \pi i x \cdot \lambda_j}$$ converges in $C^{\infty}(K)$ for every $(b_j)\in \ell_{\infty}.$ After differentiation, we obtain that the series
$$
\sum_{j=1}^{\infty}u_j(f)2 \pi b_j\lambda_j^\alpha e^{2\pi i x \cdot \lambda_j }$$ converges in $C^{\infty}(K)$ for every $\alpha \in {\mathbb N}_0^p$ and $(b_j)\in \ell_{\infty}.$ In particular, this series converges for a fixed $x_0$ in the interior of $K$, from where it follows
$$
\sum_{j=1}^{\infty}\left|u_j(f) 2 \pi \lambda_j^{\alpha}\right| < +\infty$$ for every $\alpha \in {\mathbb N}_0^p.$
Consequently $T\left(f\right)\left(x\right) := \sum_{j=1}^{\infty}u_j(f)  e^{2 \pi i x \cdot \lambda_j}$ defines a $C^{\infty}$ function in $\R^p$ and we obtain that $T:C^{\infty}(K) \to C^{\infty}({\mathbb R}^p)$ is a linear extension operator. The continuity of $T$ follows from the Banach-Steinhaus theorem, as   $T\left(f\right)$ is the pointwise limit of $T_n\left(f\right) := \sum_{j = 1}^n u_j\left(f\right)f_j$, $f_j(x):=e^{2\pi i x \cdot \lambda_j}$.
\end{proof}

\par\medskip\noindent
Assume that condition (1) in the previous theorem holds. Then, for a fixed $j_0\in {\mathbb Z}^p$ we can choose $\phi$ such that the $j_0$-th Fourier coefficient of $\phi T(e^{2\pi i \lambda^{j_0}\cdot})$ is not equal to $1.$ According to the comment after Corollary \ref{cor:perturbadualfrechet}, we may remove one of the exponentials in the atomic decomposition above and still obtain an atomic decomposition.

\par\medskip\noindent
Choosing $\psi \neq \phi $ in the proof above,  we find a different sequence $\left(v_j\right) \in C^{\infty}\left(K\right)'$  such that  $\left( \left\{v_j\right\}, \left\{e^{2 \pi i x \cdot \lambda_j }\right\} \right)$ is an unconditional atomic decomposition for $C^{\infty}(K).$ In fact, according to \cite{008}, no system of exponentials can be a basis in $C^{\infty}\left([0,1]\right).$

\vspace{.3cm}

\noindent
\textbf{Example 3.} We give an atomic decomposition of the Schwartz space ${\mathcal S}({\mathbb R}^p)$ of rapidly decreasing functions. It is inspired by the work of Pilipovic, Stoeva and Teofanov \cite{034}, although their Theorem 4.2 cannot be directly applied to conclude that one gets an atomic decomposition.
Let $a,b > 0,$ and $\Lambda = a{\mathbb Z}^p\times b{\mathbb Z}^p$ be given. For $z = (x, \xi)\in {\mathbb R}^{2p}$ and $f\in L^2({\mathbb R}^p)$ we put $\pi(z)f(t) = e^{2\pi i \xi t}f(t-x).$ Let us assume that $g\in {\mathcal S}({\mathbb R}^p)$ and $\left\{\pi(\lambda)g:\ \lambda\in \Lambda\right\}$ is a Gabor frame in $L^2({\mathbb R}^p).$ As proved by Janssen (see \cite[Corollary 11.2.6]{046}) the dual window is also a function $h\in {\mathcal S}({\mathbb R}^p)$ and every $f\in L^2({\mathbb R}^p)$ can be written as
\begin{equation}
 f = \sum_{\lambda\in \Lambda}\left<f, \pi(\lambda)g\right> \pi(\lambda)h.
\end{equation}
\par\medskip\noindent
For every $\lambda \in \Lambda$ we consider $u_{\lambda}\in {\mathcal S}^\prime ({\mathbb R}^p)$ defined by $u_{\lambda}(f) = \left<f, \pi(\lambda)g\right>.$
\begin{proposition}
 $\left((u_{\lambda})_{\lambda\in \Lambda}, (\pi(\lambda)h)_{\lambda\in \Lambda}\right)$ is an unconditional atomic decomposition for ${\mathcal S}({\mathbb R}^p).$
\end{proposition}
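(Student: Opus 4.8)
The plan is to exhibit two continuity estimates that, together with the $L^2$ reproduction formula, force the expansion $f=\sum_{\lambda\in\Lambda}u_\lambda(f)\,\pi(\lambda)h$ to converge in the topology of ${\mathcal S}({\mathbb R}^p)$, and to do so absolutely, which in a Fr\'echet space yields unconditional convergence. Throughout I use the Schwartz seminorms $\|f\|_{\alpha,\beta}:=\sup_{t\in{\mathbb R}^p}|t^\alpha\,\partial^\beta f(t)|$ and write $V_gf(z):=\langle f,\pi(z)g\rangle$ for the short-time Fourier transform, so that $u_\lambda(f)=V_gf(\lambda)$. Each $u_\lambda$ is continuous on ${\mathcal S}({\mathbb R}^p)$, since $\pi(\lambda)g\in{\mathcal S}({\mathbb R}^p)$ and $f\mapsto\langle f,\pi(\lambda)g\rangle$ is the restriction of a tempered distribution; thus $(u_\lambda)_\lambda\subset{\mathcal S}'({\mathbb R}^p)$ as required.

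The first (and easier) estimate controls the growth of the atoms. Differentiating $\pi(z)h(t)=e^{2\pi i\xi t}h(t-x)$, where $z=(x,\xi)$, by Leibniz and writing $t^\alpha=((t-x)+x)^\alpha$, one obtains for every $N$ a constant $c_N=c_N(h)$ with
\begin{equation*}
\|\pi(z)h\|_{\alpha,\beta}\le c_N\,(1+|z|)^{2N}\qquad(|\alpha|,|\beta|\le N),
\end{equation*}
since $h\in{\mathcal S}({\mathbb R}^p)$ (by Janssen's theorem) makes every $\sup_s|s^\delta\partial^\gamma h(s)|$ finite; the powers of $|\xi|$ come from the exponential and the powers of $|x|$ from the shift. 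Hence $\pi(\lambda)h$ grows at most polynomially in $|\lambda|$ with respect to each seminorm.

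The decisive estimate is the rapid decay of the coefficients. Because both $f$ and $g$ lie in ${\mathcal S}({\mathbb R}^p)$, the short-time Fourier transform $V_g\colon{\mathcal S}({\mathbb R}^p)\to{\mathcal S}({\mathbb R}^{2p})$ is continuous; consequently, for every $N$ there are $M=M(N)$ and $C_N>0$ such that
\begin{equation*}
\sup_{z\in{\mathbb R}^{2p}}(1+|z|)^{N}\,|V_gf(z)|\le C_N\,\max_{|\alpha|,|\beta|\le M}\|f\|_{\alpha,\beta}.
\end{equation*}
Restricting to $z=\lambda\in\Lambda$ gives $|u_\lambda(f)|\le C_N\,(1+|\lambda|)^{-N}\max_{|\alpha|,|\beta|\le M}\|f\|_{\alpha,\beta}$. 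This step, where the Schwartz regularity of the window $g$ is genuinely used, is the main obstacle; it is precisely the point at which the cited Theorem~4.2 of \cite{034} does not apply directly, since it does not by itself produce convergence in the intrinsic topology of ${\mathcal S}$.

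Combining the two estimates, for each pair $(\alpha,\beta)$ I pick $N\ge 2\max(|\alpha|,|\beta|)+2p+1$ in the decay estimate and identify $\Lambda\cong{\mathbb Z}^{2p}$, so that
\begin{equation*}
\sum_{\lambda\in\Lambda}|u_\lambda(f)|\,\|\pi(\lambda)h\|_{\alpha,\beta}\le C\Big(\max_{|\alpha'|,|\beta'|\le M}\|f\|_{\alpha',\beta'}\Big)\sum_{\lambda\in\Lambda}(1+|\lambda|)^{-(2p+1)}<\infty.
\end{equation*}
Thus the series $\sum_\lambda u_\lambda(f)\pi(\lambda)h$ converges absolutely, and therefore unconditionally, in the Fr\'echet space ${\mathcal S}({\mathbb R}^p)$ to some $\tilde f\in{\mathcal S}({\mathbb R}^p)$. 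It remains to identify $\tilde f$ with $f$: since the inclusion ${\mathcal S}({\mathbb R}^p)\hookrightarrow L^2({\mathbb R}^p)$ is continuous, the same series converges to $\tilde f$ in $L^2$, while the Gabor frame reproduction formula gives $\sum_\lambda u_\lambda(f)\pi(\lambda)h=f$ in $L^2$; as $L^2$ is Hausdorff, $\tilde f=f$. This establishes the unconditional reproduction $f=\sum_\lambda u_\lambda(f)\pi(\lambda)h$ in ${\mathcal S}({\mathbb R}^p)$, as claimed.
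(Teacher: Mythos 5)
Your proof is correct, but it takes a genuinely different route from the paper's. The paper invokes Gr\"ochenig's result (Corollary 11.2.6 of the cited book) that the topology of ${\mathcal S}({\mathbb R}^p)$ is generated by the time-frequency seminorms $q_n(f)=\sup_{z\in{\mathbb R}^{2p}}\left|\langle f,\pi(z)g\rangle\right|(1+|z|)^n$ built from the window $g$ itself; with these seminorms your two estimates collapse into a single covariance computation, $\left|\langle \pi(\lambda)h,\pi(z)g\rangle\right|=\left|\langle h,\pi(z-\lambda)g\rangle\right|\le q_N(h)\,v_N(z-\lambda)^{-1}$, and submultiplicativity of $v_N$ yields $\sum_{\lambda}\left|\langle f,\pi(\lambda)g\rangle\right|q_n(\pi(\lambda)h)\le q_N(h)q_N(f)\sum_{\lambda}v_N(\lambda)^{-1}v_n(\lambda)<\infty$ in one line. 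You instead work with the classical seminorms $\|\cdot\|_{\alpha,\beta}$ and split the work into two independent estimates: polynomial growth of $\|\pi(\lambda)h\|_{\alpha,\beta}$ in $|\lambda|$ (your Leibniz computation is correct) and superpolynomial decay of $V_gf(\lambda)$ via continuity of $V_g\colon{\mathcal S}({\mathbb R}^p)\to{\mathcal S}({\mathbb R}^{2p})$ --- machinery of comparable weight to the seminorm description the paper cites, and from the same source. Both arguments then reduce to absolute, hence unconditional, convergence of the series in the Fr\'echet space ${\mathcal S}({\mathbb R}^p)$, followed by identification of the sum with $f$ through the $L^2$ frame expansion; you make this last identification explicit (continuous inclusion ${\mathcal S}\hookrightarrow L^2$ plus Hausdorffness), whereas the paper leaves it implicit after its ``we only need to check'' reduction --- a point in your favour. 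What the paper's approach buys is brevity and the structural observation that ${\mathcal S}$ is itself described by Gabor coefficients, which ties the example to modulation spaces; what yours buys is independence from that particular description of the topology and a more hands-on, self-contained verification.
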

\begin{proof}
 According to \cite[Corollary 11.2.6]{046}, the topology of ${\mathcal S}({\mathbb R}^p)$ can be described by the sequence of seminorms
$$
q_n(f):=\sup_{z\in {\mathbb R}^{2p}}\left|\left<f, \pi(z)g\right>\right|v_n(z),\ n\in {\mathbb N},
$$ where $v_n(z) = (1 + |z|)^n.$ So, we only need to check that, for every $n\in {\mathbb N},$
\begin{equation}\label{eq:gabor}
 \sum_{\lambda\in \Lambda}\left|\left<f, \pi(\lambda)g\right>\right| q_n\left(\pi(\lambda)h\right) < \infty.
\end{equation} To this end, we fix $N > n$ large enough. Since $$\left|\left<\pi(\lambda)h, \pi(z)g\right>\right| \leq \left|\left<h, \pi(z-\lambda)g\right>\right| \leq q_N(h)v_N(z-\lambda)^{-1}$$ and $v_n$ is submultiplicative we obtain that (\ref{eq:gabor}) is dominated by
$$
q_N(h)q_N(f)\sum_{\lambda\in \Lambda}\left(v_N(\lambda)\right)^{-1}v_n(\lambda) < \infty
$$ and the proof is finished.
\end{proof}

This example is closely related to the fact that $\left\{\pi(\lambda)g:\ \lambda\in \Lambda\right\}$ is a Gabor frame for each modulation space defined in terms of a polynomially moderate weight; see for instance \cite[Corollary 12.2.6]{046}.

\vspace{.2cm}

\textbf{Acknowledgement.} This research was partially supported by MEC and FEDER Project MTM2010-15200.

%\bibliographystyle{amsalpha}
%\bibliographystyle{plain}
%\bibliography{biblio}
%\nocite{*}
%\nocite{}%de este modo no aparecen las referencias que no se citan

\noindent \textbf{Author's address:}%
\vspace{\baselineskip}%

Jos\'e Bonet (corresponding author): Instituto Universitario de Matem\'{a}tica Pura y Aplicada IUMPA, Universitat Polit\`{e}cnica de Val\`{e}ncia,  E-46071 Valencia, Spain. 

email: jbonet@mat.upv.es; phone number: +34963879497; fax number: +34963879494. \\

Carmen Fern\'andez and Antonio Galbis: Departamento de An\'alisis Matem\'atico, Universitat de Val\`encia, E-46100 Burjasot (Valencia), Spain. 

emails: fernand@uv.es, antonio.galbis@uv.es \\

Juan M. Ribera: Instituto Universitario de Matem\'{a}tica Pura y Aplicada IUMPA,
Universitat Polit\`{e}cnica de Val\`{e}ncia,  E-46071 Valencia, Spain.

email: juaripuc@mat.upv.es

\end{document}